\documentclass[10pt,a4paper,english]{amsart}
\usepackage{babel}
\usepackage{amsfonts, amsmath, wasysym}
\usepackage{amssymb, amsthm,enumerate}
\usepackage{mathrsfs}
\usepackage{verbatim}
\usepackage[numbers,sort,square]{natbib}
\usepackage{color}
\usepackage[colorinlistoftodos,prependcaption,color=yellow,textsize=tiny]{todonotes}
\newfont{\ffont}{cmr10}

\allowdisplaybreaks

\newcommand{\eps}{\varepsilon}

\newcommand{\xspace}{\hbox{\kern-2.5pt}}

\newcommand{\normm}[1]{{\left\vert\kern-0.25ex\left\vert\kern-0.25ex\left\vert #1 
    \right\vert\kern-0.25ex\right\vert\kern-0.25ex\right\vert}}

\renewcommand{\P}{{\mathbb P}}

\newcommand{\norm}[1]{\left\lVert#1\right\rVert}

\newcommand{\ud}[0]{\,\mathrm{d}}
\newcommand{\dd}[0]{\mathrm{d}}

\begin{document}

\newtheorem{definition}{Definition}
\newtheorem{theorem}[definition]{Theorem}
\newtheorem{conjecture}[definition]{Conjecture}
\newtheorem{proposition}[definition]{Proposition}
\newtheorem{corollary}[definition]{Corollary}
\newtheorem{remark}[definition]{Remark}
\newtheorem{lemma}[definition]{Lemma}
\newtheorem{assumption}[definition]{Assumption}
\newtheorem{example}[definition]{Example}
\newtheorem{exercise}{Exercise}

\title[Tail characterizations of UMD]{Characterizations of the UMD property\\ via tail estimates for tangent processes}

\author{Gergely Bod\'o}
\address{}
\email{g.z.bodo@uva.nl}
\thanks{}

\author{Ivan Yaroslavtsev}
\address{}
\email{yaroslavtsev.i.s@yandex.ru}
\thanks{}

\keywords{tail estimates, tangent processes, UMD Banach spaces, Lorentz norms, decoupling, purely discontinuous martingales}
\subjclass[2020]{Primary 60B11; Secondary 46G10, 60H05}

\begin{abstract}
We characterize the UMD property of a Banach space by tail inequalities for maximal functions of tangent conditionally symmetric processes. More precisely, we prove that a Banach space $V$ is UMD if and only if for some (equivalently, for all) $p\in(0,\infty)$ one has that
\[
\mathbb P(\sup_{r\geq 0} \| N_r\|>t)\lesssim_{p,V}\Bigl(\frac{s^p}{t^p}+\mathbb P(\sup_{r\geq 0} \| M_r\|>s)\Bigr),
\qquad s,t>0,
\]
for all tangent conditionally symmetric $V$-valued processes $M$ and $N$. We further show that this estimate is equivalent to suitable Lorentz norm inequalities for the associated maximal functions, and obtain analogous characterizations in the discrete-time, continuous-time, and purely discontinuous settings.
\end{abstract}
\maketitle

\section{Introduction}
The UMD property is one of the central geometric conditions in vector-valued
probability theory and harmonic analysis. Recall that a Banach space $V$ is called UMD if for some (equivalently, for all)
$p \in (1,\infty)$ there exists a constant $\beta>0$ such that
for every $N \geq 1$, every martingale
difference sequence $(d_n)^N_{n=1}$ in $L^p(\Omega; V)$, and every scalar-valued 
sequence
$(\varepsilon_n)^N_{n=1}$ such that $|\varepsilon_n|=1$ for each $n=1,\ldots,N$
we have
\begin{equation}\label{eq:UMD_beta}
\Bigl(\mathbb E \Bigl\| \sum^N_{n=1} \varepsilon_n d_n\Bigr\|^p\Bigr )^{\frac 
1p}
\leq \beta \Bigl(\mathbb E \Bigl \| \sum^N_{n=1}d_n\Bigr\|^p\Bigr )^{\frac 1p}.
\end{equation}
This property is deeply connected to several areas of mathematics including vector-valued singular
integrals, stochastic integration, maximal regularity, and decoupling theory;
see for instance \cite{Burk81,Burk01,Rubio86,HNVW1,Pis16}.

Modern stochastic integration theory in Banach spaces relies heavily on decoupling and related maximal inequalities. This is evident both in the Gaussian UMD framework developed by van Neerven, Veraar, and Weis \cite{NVW07c}, and in Dirksen’s treatment of Poisson integrators in $L^p$-spaces \cite{Dirk14}. More recently, Bod\'o and Riedle \cite{BR24} showed that in Hilbert spaces decoupling methods remain effective even in situations where classical strong $L^p$ bounds are unavailable. In Banach spaces, however, decoupling techniques become rather limited once one moves beyond the regime of processes with finite $p$-th moments for some $p\geq 1$.

The purpose of this work is to extend the decoupling toolkit in Banach spaces by establishing two new characterizations of the UMD property. First, we show that one may characterize the UMD property
by a substantially weaker-looking estimate, which provides a purely distributional
comparison of maximal functions. More precisely, we prove that a Banach space
$V$ is UMD if and only if for any (equivalently, for all) $p\in(0,\infty)$ there exists a constant $\widetilde C_{p,V}>0$ such that for every pair
of $V$-valued tangent conditionally symmetric processes $M$ and $N$ 
one has
\begin{equation}\label{eq:INTRO:L0_to_L0}
\P(N^*>t)\le \widetilde C_{p,V}\Bigl(\frac{s^p}{t^p}+\P(M^*>s)\Bigr), \qquad \text{for all } s,t>0.
\end{equation}
Hence, one does not need a full $L^p$-norm comparison: a suitable control of the tail distribution of $N^*$ in terms of that of $M^*$ already characterizes the UMD property.

Second, we show that this principle extends naturally to the Lorentz scale. More precisely, the UMD property is equivalent to the Lorentz (quasi-)norm estimates
\begin{equation}\label{eq:INTRO:Lorentz_estimates}
  \|N^*\|_{p', q'} \leq C_{p,q,p',q',V} \|M^*\|_{p, q},
\end{equation}
for any (equivalently, for all) $p,p'\in(0,\infty)$ and $q,q'\in(0,\infty]$ such that either $p'=p$ and $q'\ge q$, or $p'<p$, and for some constant $C_{p,q,p',q',V}>0$ depending only on $p$, $q$, $p'$, $q'$, and $V$. In particular, both weak and strong maximal $L^p$ inequalities characterize the UMD property, even for $p<1$. The estimates \eqref{eq:INTRO:L0_to_L0} and \eqref{eq:INTRO:Lorentz_estimates} will be used in our forthcoming work \cite{BYst} on sharp bounds for stochastic integrals with respect to symmetric stable processes and random measures.

Finally, we show that both characterizations continue to hold even if one works only within certain natural subclasses of conditionally symmetric processes, namely continuous local martingales, ``Poisson-like'' purely discontinuous quasi-left continuous processes, or ``discrete-like'' purely discontinuous processes with accessible jumps. Recall that every real-valued local martingale admits a unique decomposition into these three components; moreover, in the vector-valued setting, the availability of such a decomposition characterizes the UMD property, see \cite{Y17MartDec,Y17GMY}. This shows that the mechanism captured by our tail estimates is robust across the main probabilistic frameworks in which decoupling arguments arise.

\medskip

{\em Acknowledgments}: The authors would like to thank Sonja Cox and Stefan Geiss for their helpful suggestions and observations.

\section{Conditionally symmetric processes}

Let $(\Omega,\mathcal F, \mathbb P)$ be a probability space with a filtration $\mathbb F = (\mathcal F_t)_{t\geq 0}$ which satisfies the usual conditions (see \cite{Prot,Kal,JS}).  

 Let $V$ be a Banach space, $M:\mathbb R_+\times \Omega \to V$ be a local martingale. Assume that $M$ has the Meyer-Yoeurp decomposition $M=M^c + M^d$ (i.e.\ the unique decomposition so that $M^c$ is continuous satisfying $M^c_0=0$ and $M^d$ is purely discontinuous; such a decomposition for all local martingales characterizes the UMD property thanks to \cite{Y17GMY}). Then the pair $([\![M^c]\!],\nu^M)$ is called {\em the local characteristics of $M$}, where $[\![M^c]\!]$ is the covariation bilinear form of $M^c$ (i.e.\ $[\![M^c]\!]_t(x^*,y^*)=[\langle M^c, x^*\rangle,\langle M^c, y^*\rangle]_t$ a.s.\ for any $t\geq 0$ and $x^*, y^*\in V^*$), and $\nu^M$ is the compensator of the jump measure $\mu^M$ of $M$ defined 
 by 
 \begin{equation*}\label{eq:defofmuM}
\mu^M([0, t] \times B) := \sum_{0\leq s\leq t} \mathbf 1_{B\setminus\{0\}} (\Delta M_s),\;\;\; t\geq 0, \;\; B \in \mathcal B(V).
\end{equation*}
(We refer the reader to \cite{JS,Yar20} for further information on random measures and local characteristics.)

 Let $M, N:\mathbb R_+\times \Omega \to V$ be local martingales admitting a Meyer-Yoeurp decomposition.  $M$ and $N$ are called {\em tangent} if their local characteristics coincide a.s. $M$ is called {\em conditionally symmetric} if $M$ and $-M$ are tangent (see e.g.\ \cite{JS,Yar20}).

Throughout the note we will be discussing conditionally symmetric {\em processes}, not {\em martingales}. A c\`adl\`ag (a French abbreviation of the phrase ``continuous from right, limits from left'') process  $M:\mathbb R_+\times \Omega \to V$ is called {\em conditionally symmetric} if it admits decomposition into a sum $M=M^c + M^d$ with $M^c$ being a continuous local martingale, if for each $n$ the process
\begin{equation}\label{eq:MndefforMpdqlcnotmart}
t\mapsto M^n_t:= M^c_t + \sum_{0\leq s\leq t} \Delta M_s \mathbf 1_{ 1/n\leq\|\Delta M_s \|\leq n}, \;\;\; t\geq 0,
\end{equation}
is a conditionally symmetric local martingale, and if $M^n\to M$ as $n\to \infty$ in the ucp topology (i.e.\ $(M^n-M)^*_t:=\sup_{0\leq s\leq t}\|M^n_s-M_s\|\to 0$ in probability for any $t\geq 0$ as $n\to \infty$, see \cite[Section II.4]{Prot} for the definition of ucp and further details). A standard symmetric $\alpha$-stable process $L:\mathbb R_+\times \Omega\to \mathbb R$ (as well as any non-zero stochastic integral with respect to such a symmetric $\alpha$-stable process) is not a local martingale provided $\alpha\in (0,1]$ as its compensator is of the form $\nu(\dd x,\ud t)= C|x|^{-\alpha-1}\ud x \ud t$ (see e.g.\ \cite[(14.4)]{Sato}),  so lack of (even local) integrability follows from \cite[Theorem 2.5.2]{AppLPSC} and real-valued decoupling inequalities \cite[Theorem 4.1]{Kal17}, although $L$ is a conditionally symmetric process thanks to the symmetry of the process itself. 

\begin{remark}
In general, one can even construct a L\'evy process $L$ with no finite moments of any order $p>0$. To see this, assume that $L$ has compensator $\nu$ of the form $\nu(\dd x,\dd t)= \rho(x)\ud x\ud t$ with $\int_\mathbb R \rho(x) (x^2 \wedge 1)\ud x<\infty$ but with $\int_{|x|\geq 1} \rho(x)|x|^p\, {\rm d}x=\infty$ for any $p>0$ (see \cite[Theorem 2.5.2]{AppLPSC}). An explicit example of such a $\rho$ is given by $\rho(x)=\mathbf 1_{|x|\geq 2} \frac {1}{|x|\ln(|x|)^2}$ for $x \in \mathbb{R}$. As one readily confirms, for such a L\'evy process, (even local) $L^p$ bounds are not possible for any $p>0$, so the tail estimates of the form \eqref{eq:INTRO:L0_to_L0} are of special interest.
\end{remark}

Two conditionally symmetric processes $M$ and $N$ are called {\em tangent} if $M^n$ and $N^n$ are tangent for any $n\geq 1$. This in particular means that if $M=M^c + M^d$ and $N=N^c+N^d$ are the Meyer-Yoeurp decompositions of $M$ and $N$, then $M^c$ and $N^c$ are tangent as well by \cite[Theorem 3.9]{Yar20} since those are continuous components of $M^n$ and $N^n$ respectively (see e.g.\ \cite{Kal,Y17GMY}).

\begin{remark}
Note that $M$ being conditionally symmetric implies that $\nu^M=\nu^{-M}$ as for any $n\geq 1$ one has 
\[
 \nu^M(\dd x, \ud t) \mathbf 1_{1/n\leq\|x\|\leq n} = \nu^{M^n}(\dd x, \ud t)  =\nu^{-M^n}(\dd x, \ud t) = \nu^{-M}(\dd x, \ud t) \mathbf 1_{1/n\leq\|x\|\leq n}.
\]
However,
not every process satisfying $\nu^M=\nu^{-M}$ can be approximated by local martingales of the form \eqref{eq:MndefforMpdqlcnotmart}, e.g.\ a symmetric $\alpha$-stable process with a nonzero linear drift satisfies $\nu^M=\nu^{-M}$, but $M^n$ would converge to the symmetric non-drift part, not to the original process. Thus one also needs the ucp approximation $M^n\to M$.
\end{remark}

We will later need the following implication of the UMD property (see e.g.\  \cite[Corollary 3.33 and Theorem 5.14]{Yar20}): if $V$ is UMD, then for any $0< p<\infty$ and for any $V$-valued tangent conditionally symmetric martingales $M,N:\mathbb R_+\times \Omega \to V$ we have that
\begin{equation}\label{eq:UMD maximal}
  \mathbb E (N^*)^p
\leq C_{p, V}^p\mathbb E(M^*)^p,
\end{equation}
where $M^* := \sup_{t\geq 0} \|M_t\|$, $N^* := \sup_{t\geq 0} \|N_t\|$, and 
where the minimal admissible constant $C_{p,V}$ is called 
the {\em maximal decoupling UMD$_p$~constant}. Note that $C_{p, V}\geq 1$ as any martingale is tangent to itself. 

The estimate \eqref{eq:UMD maximal} characterizes UMD if $p\geq 1$ (see \cite[Corollary 3.33]{Yar20}). In fact, as we shall see in Corollary \ref{cor:maximal_0<p<1_equiv_UMD} below, \eqref{eq:UMD maximal} characterizes UMD even if $0<p<1$.

\section{Good-$\lambda$ inequalities}
We now start with the following good-$\lambda$ inequalities for general conditionally symmetric processes.

\begin{proposition}\label{prop:goodlambdapdQlc}
Let $V$ be a UMD Banach space, $M$ and $N$ be $V$-valued tangent conditionally symmetric processes. Then for any $p_0\in(0, \infty)$, $\delta> 0$ and any $\beta>\delta + 1$ we have that
\begin{equation}\label{eq:goodlforstochintwwrtrm}
\mathbb P(N^*>\beta\lambda, \Delta M^* \vee\Delta N^* \vee M^* \leq \delta \lambda) \leq \frac{2^{p_0} \delta^{p_0}  C_{p_0,V}^{p_0}}{(\beta-\delta - 1)^{p_0}}\mathbb P(N^*>\lambda),\;\;\; \lambda >0,
\end{equation}
where $\Delta M^* := \sup_{t\geq 0} \|\Delta M_t\|$, $\Delta N^* := \sup_{t\geq 0} \|\Delta N_t\|$, and where $C_{p_0, V}$ is the maximal decoupling UMD$_{p_0}$-constant of $V$ (see \eqref{eq:UMD maximal}).
\end{proposition}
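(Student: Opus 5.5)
We follow the classical Burkholder good-$\lambda$ scheme, using the stopping times
\[
\sigma := \inf\{t\geq 0: \|N_t\|>\lambda\},\quad \tau := \inf\{t\geq 0: \|N_t\|>\beta\lambda\},\quad \rho := \inf\{t\geq 0: \|M_t\|\vee\|\Delta M_t\|\vee \|\Delta N_t\|>\delta\lambda\}.
\]
On the event in the left-hand side of \eqref{eq:goodlforstochintwwrtrm} we have $\rho=\infty$, $\sigma\leq\tau<\infty$, and $\{N^*>\lambda\}=\{\sigma<\infty\}$. The plan is to introduce the ``restarted'' processes
\[
\widetilde M_t := M_{t\wedge\tau\wedge\rho}-M_{t\wedge\sigma\wedge\rho},\qquad \widetilde N_t := N_{t\wedge\tau\wedge\rho}-N_{t\wedge\sigma\wedge\rho}.
\]
These vanish outside $\{\sigma<\infty\}$. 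They are again tangent conditionally symmetric, because stopping and restarting at stopping times preserve the local characteristics: on the stochastic interval $(\sigma\wedge\rho,\tau\wedge\rho]$ the characteristics of $\widetilde M,\widetilde N$ are the restrictions of those of $M,N$.

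The pathwise estimates are as follows. Before the jump at $\sigma$ we have $\|N_{\sigma-}\|\leq\lambda$, and $\|\Delta N_\sigma\|\leq\delta\lambda$ if $\sigma\leq\rho$, so $\|N_\sigma\|\leq(1+\delta)\lambda$. Hence on the bad event $\widetilde N^*\geq \|N_\tau\|-\|N_\sigma\|\geq(\beta-1-\delta)\lambda$. On the other side, $\|M_t\|\leq\delta\lambda$ for $t<\rho$, and $\|M_\rho\|\leq 2\delta\lambda$ because the jump is at most $\delta\lambda$ when $\|\Delta M_\rho\|\leq\delta\lambda$. Some care is needed here: when $\rho$ is triggered by a large jump, that jump is included. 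To avoid this, I would stop at $\rho-$ for the jump part, or simply note that $\rho$ is triggered and the jump is big. The cleanest fix is to define $\widetilde M,\widetilde N$ with the jump at $\rho$ removed, i.e.\ to use the predictable-type truncation: replace the jumps of size $>\delta\lambda$ by $0$ simultaneously in $M$ and $N$. Since $M$ and $N$ share the same $\nu$, this truncation $x\mapsto x\mathbf 1_{\|x\|\leq\delta\lambda}$ preserves tangency and conditional symmetry. With this choice $\widetilde M^*\leq 2\delta\lambda$, and $\widetilde M^*=0$ off $\{\sigma<\infty\}$.

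We then apply \eqref{eq:UMD maximal} with $p_0$. Since $\widetilde M$ is bounded, it is a genuine martingale after localization, and the truncation also makes the approximation issue in the definition of conditional symmetry harmless. Chebyshev then gives
\[
\P(\text{bad event})\leq \P\bigl(\widetilde N^*\geq(\beta-\delta-1)\lambda\bigr)\leq \frac{C_{p_0,V}^{p_0}\,\mathbb E(\widetilde M^*)^{p_0}}{(\beta-\delta-1)^{p_0}\lambda^{p_0}}\leq\frac{2^{p_0}\delta^{p_0}C_{p_0,V}^{p_0}}{(\beta-\delta-1)^{p_0}}\P(N^*>\lambda),
\]
which is \eqref{eq:goodlforstochintwwrtrm}.

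The main obstacle is justifying rigorously that $\widetilde M,\widetilde N$ are tangent conditionally symmetric local martingales to which \eqref{eq:UMD maximal} applies. This requires passing through the approximants $M^n,N^n$: the truncated processes are ucp limits of the corresponding truncations of $M^n,N^n$, so one applies \eqref{eq:UMD maximal} to these and passes to the limit by Fatou. Alongside this one must check the boundary jump at $\rho$ and the strict versus non-strict inequalities.
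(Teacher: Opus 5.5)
Your construction is essentially the paper's: restart at $\sigma$, stop when $M$ or one of the jumps leaves the $\delta\lambda$-ball, and truncate the jumps of both processes at level $\delta\lambda$ so that tangency and conditional symmetry survive. The paper builds the truncation in through compensated integrals of $x\mathbf 1_{\|x\|\le\delta\lambda}$ against $\bar\mu^M,\bar\mu^N$. Then one applies \eqref{eq:UMD maximal} and Markov's inequality; your extra stopping at the first exceedance of $\beta\lambda$ is harmless.

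The step that does not work as you sketch it is the passage from local martingales to general conditionally symmetric processes. Suppose you apply \eqref{eq:UMD maximal} to the restarted truncations $\widetilde M^n,\widetilde N^n$ of $M^n,N^n$, with $\sigma,\tau,\rho$ still built from $M,N$. Fatou's lemma only gives $\mathbb E(\widetilde N^*)^{p_0}\le\liminf_n\mathbb E(\widetilde N^{n,*})^{p_0}$. On the right-hand side you would need $\limsup_n\mathbb E(\widetilde M^{n,*})^{p_0}\lesssim(\delta\lambda)^{p_0}\mathbb P(N^*>\lambda)$, and that is not available. On $[0,\rho)$, $\widetilde M^n$ is the restart of $M^n$, which differs from the restart of $M$ by the small-jump part $M^n-M$. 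That part is only small in probability, not bounded, so you have no uniform integrability.

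The paper sidesteps this by proving the whole good-$\lambda$ inequality for local martingales first. When it is applied to $M^n,N^n$, the stopping times are built from $M^n,N^n$ themselves, so the restarted process is automatically bounded. The limit is then taken only in the resulting probability inequality on $[0,T]$, where convergence in probability of the suprema suffices, after a slight perturbation of $\beta,\delta,\lambda$ to handle strict versus non-strict inequalities.

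Alternatively, you could push your ucp remark through. A ucp limit of local martingales with jumps bounded by $\delta\lambda$ is again a local martingale: localize at the first exit of the limit from a ball and use bounded convergence. Once you also identify the characteristics of the limit, the truncated processes are themselves tangent conditionally symmetric local martingales, and \eqref{eq:UMD maximal} applies to $\widetilde M,\widetilde N$ directly, with no limit inside the moment inequality.

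A smaller point: the bound $\widetilde M^*\le 2\delta\lambda$ does not follow from what you wrote. Take $\sigma<\rho<\infty$ with $\rho$ triggered by $\|M_\rho\|>\delta\lambda$ through a retained jump of size at most $\delta\lambda$. Then you only get $\|\widetilde M_\rho\|\le\|M_\rho\|+\|M_\sigma\|\le 3\delta\lambda$, so your argument yields the constant $3^{p_0}$ rather than $2^{p_0}$. The paper's proof has the same slip: its bound $\|\widehat M_t\|\le 2\delta\lambda$ is justified on $[0,\tau\wedge\rho)$ but not at $\tau\wedge\rho$. This changes only numerical constants downstream.
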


Note that due to \cite[Corollary 3.33]{Yar20} and Doob's maximal inequality one has $C_{p_0, V}\leq \frac{p_0}{(p_0-1)}\beta_{p_0, V}^2$ for $p_0>1$, with $\beta_{p_0,V}$ being the minimal admissible constant $\beta$ in \eqref{eq:UMD_beta} provided $p=p_0$. 

\begin{proof}
Let us first assume that $M$ and $N$ are local martingales. Then the proof follows the lines of the proof of  \cite[Proposition 5.6]{Yar20}, even though $M$ and $N$ there are assumed to be purely discontinuous quasi-left continuous and $p_0\geq 1$, the logic is exactly the same. In particular, introducing the notation of \cite[Proposition 5.6]{Yar20}, we define the stopping times
\begin{equation*}
\begin{split}
\sigma &:= \inf \{ t \geq 0 : \|N_t\| > \lambda \},\\
\tau &:= \inf \{ t \geq 0 : \|M_t\| > \delta \lambda \},\\
\rho &:= \inf \{ t \geq 0 : \|\Delta M_t\| \vee \|\Delta N_t\| > \delta \lambda \},
\end{split}
\end{equation*}
and write $\bar{\mu}^M:=\mu^M-\nu^M$ and $\bar{\mu}^N:=\mu^N-\nu^N$ for compensated jump measures of $M$ and $N$, respectively. With this notation in mind, we define the processes
\begin{equation*}
\begin{split}
\widehat{M}_t&:= \int_{[0,t]\times V}
\mathbf{1}_{\|x\| \le \delta \lambda} \, x \,
\mathbf{1}_{(\tau \wedge \sigma \wedge \rho,\, \tau \wedge \rho]}(s)
\, d\bar{\mu}^M(s,x) +M^c_{\tau \wedge \rho \wedge t} - M^c_{\tau \wedge \sigma \wedge \rho \wedge t}, \quad t \geq 0,\\
\widehat{N}_t &:= \int_{[0,t]\times V}
\mathbf{1}_{\|x\| \le \delta \lambda} \, x \,
\mathbf{1}_{(\tau \wedge \sigma \wedge \rho,\, \tau \wedge \rho]}(s)
\, d\bar{\mu}^N(s,x)+N^c_{\tau \wedge \rho \wedge t} - N^c_{\tau \wedge \sigma \wedge \rho \wedge t}, \quad t \geq 0.
\end{split}
\end{equation*}
 Note that all properties of $\widehat{M}$ and $\widehat{N}$ used in the proof of \cite[Proposition 5.6]{Yar20}, including \cite[Lemma 5.7]{Yar20}, remain valid in the present setting. Indeed, pure discontinuity and quasi-left continuity of $M$ and $N$ play no role in that argument. In particular, one can argue exactly as in \cite[Proposition 5.6]{Yar20} to show that $\widehat{M}$ and $\widehat{N}$ are tangent conditionally symmetric local martingales, so by \eqref{eq:UMD maximal} we have
\[
\mathbb E \sup_{t\geq 0} \|\widehat{N}_t\|^{p_0}
\leq C_{p_0,V}^{p_0} \mathbb E \sup_{t\geq 0} \|\widehat{M}_t\|^{p_0}
\leq C_{p_0,V}^{p_0}2^{p_0}\delta^{p_0}\lambda^{p_0}\mathbb P(N^*>\lambda),
\]
where the latter inequality follows from the fact that $\widehat{M}$ coincides with $M-M^{\tau \wedge \sigma \wedge \rho}$ on $[0,\tau \wedge \rho)$ by construction, so $\|\widehat{M}_t\|\leq 2\delta\lambda$ for all $t\geq 0$, while $\widehat{M}=0$ on $\{\sigma=\infty\}$. Now let
\[
A:=\{N^*>\beta\lambda,\ \Delta M^* \vee \Delta N^* \vee M^* \leq \delta \lambda\}.
\]
On $A$ one has $\tau=\infty$ and $\rho=\infty$ a.s. Moreover, since $\beta>\delta+1>1$, we have $\beta\lambda>\lambda$, and hence $\sigma<\infty$ on $A$. Further, since $\|N_{\sigma-}\|\leq \lambda$ and $\|\Delta N_\sigma\|\leq \delta\lambda$, it follows that $\norm{N_\sigma}\leq (\delta+1)\lambda$. Hence, on $A$, one necessarily has $\widehat{N}^* > (\beta-\delta-1)\lambda$. Therefore, by Markov's inequality,
\[
\mathbb P(A)\leq \mathbb P\bigl(\widehat{N}^*>(\beta-\delta-1)\lambda\bigr)
\leq \frac{\mathbb E (\widehat{N}^*)^{p_0}}{(\beta-\delta-1)^{p_0}\lambda^{p_0}}
\leq \frac{2^{p_0}\delta^{p_0} C_{p_0,V}^{p_0}}{(\beta-\delta-1)^{p_0}}\mathbb P(N^*>\lambda),
\]
which proves \eqref{eq:goodlforstochintwwrtrm} for local martingales.

Now let $M$ and $N$ be general tangent conditionally symmetric processes. Let $M^n$ be defined by \eqref{eq:MndefforMpdqlcnotmart}, $N^n$ be defined analogously. Then $M^n$ and $N^n$ are conditionally symmetric local martingales; moreover, these processes are tangent as $M^c$ and $N^c$ are tangent by \cite[Theorem 3.9]{Yar20} (here $M=M^c + M^d$ and $N=N^c+N^d$ are the Meyer-Yoeurp decompositions of $M$ and $N$), and as
\begin{multline*}
\nu^{M^n}(\dd x, \ud t)  =\nu^{-M^n}(\dd x, \ud t) = \nu^M(\dd x, \ud t) \mathbf 1_{1/n\leq\|x\|\leq n}\\
 =  \nu^N(\dd x, \ud t) \mathbf 1_{1/n\leq\|x\|\leq n}=\nu^{N^n}(\dd x, \ud t)  =\nu^{-N^n}(\dd x, \ud t),
\end{multline*}
consequently, \eqref{eq:goodlforstochintwwrtrm} holds true for $M^n$ and $N^n$. 

Fix \(T\geq 0\). As the argument remains valid on the finite time interval \([0,T]\), and since \(M^n\to M\) and \(N^n\to N\) in ucp, the corresponding truncated suprema converge in probability. Therefore, by taking limits, we may pass from the inequality for \(M^n\) and \(N^n\) to the corresponding inequality for \(M\) and \(N\) on \([0,T]\). Finally, letting \(T\to\infty\), we obtain \eqref{eq:goodlforstochintwwrtrm} by monotone convergence of the underlying events.
\end{proof}

\section{$L^0$ to $L^0$ inequalities}
The first consequence of the previous proposition is the following tail inequality, which in the real-valued setting is known due to de la Pe\~na \cite[Theorem 2.1]{dlP93}, see \cite[Theorem 5.2.2]{KwW92} and \cite[Theorem 6.3.1]{dlPG} for the Hilbert space version. In the vector-valued setting this inequality turns out to characterize the UMD property.

\begin{theorem}\label{thm.decoupling_L_0}
 Let $V$ be a Banach space, $p>0$. Then $V$ is UMD if and only if there exists a constant $\widetilde C_{p, V}$ depending only on $p$ and $V$ such that for any $V$-valued tangent conditionally symmetric processes $M$ and $N$ one has that
 \begin{equation}\label{eq:co.decoupling_L_0}
    \mathbb P(N^*>t)\leq \widetilde C_{p, V} \Bigl(\frac {s^p}{t^p} + \mathbb P(M^*>s)\Bigr),\;\;\; t,s>0.
 \end{equation}
 Moreover, if this is the case, then $ \widetilde C_{p, V}\leq  8^{p\vee 1}C_{p, V}^p$, where $C_{p, V}$ is defined by \eqref{eq:UMD maximal}.
\end{theorem}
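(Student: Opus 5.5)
The plan is to prove the two implications separately. The forward direction (UMD $\Rightarrow$ \eqref{eq:co.decoupling_L_0}) is a de la Pe\~na type stopping argument built on \eqref{eq:UMD maximal}. The converse reduces \eqref{eq:co.decoupling_L_0} to a statement about Paley--Walsh martingale transforms and then appeals to Burkholder's characterization of UMD.

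\emph{UMD $\Rightarrow$ \eqref{eq:co.decoupling_L_0}.} Fix $s,t>0$. Note that $\Delta M^*\le 2M^*$, so on $\{M^*\le s\}$ all jumps of $M$ have norm at most $2s$.
\begin{enumerate}[(i)]
\item Let $M'$ and $N'$ be obtained from $M$ and $N$ by deleting all jumps of norm $>2s$. Concretely, $M'$ is the ucp limit of $M^n$ in \eqref{eq:MndefforMpdqlcnotmart} with the indicator $\mathbf 1_{1/n\le\|x\|\le 2s}$, and similarly for $N'$. Their local characteristics are those of $M,N$ with $\nu$ restricted to $\{\|x\|\le 2s\}$. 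Hence $M'$ and $N'$ are tangent, conditionally symmetric, and, having bounded jumps, genuine local martingales.
\item Put $\tau:=\inf\{t\ge0:\|M'_t\|>s\}$. Then $\|M'^{\tau}\|\le 3s$, so \eqref{eq:UMD maximal} applied to the tangent pair $M'^{\tau},N'^{\tau}$ and Markov's inequality give
\[
\P\bigl((N'^{\tau})^*>t\bigr)\le C_{p,V}^p 3^p s^p/t^p .
\]
\item Split
\[
\P(N^*>t)\le \P(N\ne N')+\P(M\ne M')+\P(M^*>s)+\P\bigl((N'^{\tau})^*>t\bigr).
\]
This uses that on $\{M=M',\ M^*\le s\}$ one has $\tau=\infty$.
\item It remains to bound the big-jump events. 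The number of jumps of norm $>2s$ is a counting process whose compensator $A=\nu^M(\{\|x\|>2s\}\times[0,\cdot])$ is the same for $M$ and $N$ by tangency. Via the standard bound $\P(\text{count}\ge1)\ge \mathbb E(1-e^{-A_\infty})$ (exponential martingale), we get $\P(N\ne N')\le \mathbb E(A_\infty\wedge1)\le \frac{e}{e-1}\P(M\ne M')$. Moreover $\P(M\ne M')\le \P(M^*>s)$.
\end{enumerate}
Collecting terms yields \eqref{eq:co.decoupling_L_0} with constant of order $3^pC_{p,V}^p$ plus an absolute constant. Using $C_{p,V}\ge1$ and some slack, this fits under $8^{p\vee1}C_{p,V}^p$. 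I expect only routine bookkeeping here. The one point to check carefully is the ucp construction of the truncated processes for non-integrable $M$, i.e.\ processes that are not local martingales.

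\emph{\eqref{eq:co.decoupling_L_0} $\Rightarrow$ UMD.} Take a $V$-valued Paley--Walsh martingale $f_n=\sum_{k\le n} r_k v_k(r_1,\dots,r_{k-1})$ and a sign sequence $\varepsilon_k=\pm1$, and set $g_n=\sum_{k\le n}\varepsilon_k r_k v_k$. Embedded as piecewise constant processes, $f$ and $g$ are tangent and conditionally symmetric: both have conditional jump law symmetric with values $\pm v_k$ at integer times. Apply \eqref{eq:co.decoupling_L_0} with $s$ equal to the $L^\infty$ bound of $f^*$, so that $\P(f^*>s)=0$, and with $t=Ks$. This gives $\P(g^*>K\|f^*\|_\infty)\le \widetilde C_{p,V}K^{-p}$. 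Thus the $\pm1$ transforms of uniformly bounded Paley--Walsh martingales are uniformly bounded in probability.

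The main obstacle is to upgrade this $L^\infty\to L^0$ bound to UMD. My first attempt would be Burkholder's contrapositive construction. If $V$ is not UMD, then for every $K$ there are finite Paley--Walsh martingales with $\|f\|_\infty\le1$ whose transform has $\|g_N\|>K$ on a set of probability close to $1$. This comes from the failure of $\zeta$-convexity, equivalently from concatenating independent copies of bad finite trees and stopping when the norm first becomes large, which boosts a small bad probability to one near $1$. That would contradict the uniform bound in probability. As a fallback, I would use the tail estimate in a good-$\lambda$ fashion, conditioning on $\mathcal F_\sigma$ at stopping times and iterating, to extract a weak-type $L^q$ bound for the transforms. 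Weak-type boundedness is known to characterize UMD by Burkholder's results.
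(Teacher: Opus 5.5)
\paragraph{Forward direction.} Your forward direction takes a genuinely different and more direct route than the paper. The paper first proves the good-$\lambda$ inequality of Proposition~\ref{prop:goodlambdapdQlc} and then chooses $\delta=1$, $\lambda=2s$, $\beta=t/(2s)$. You instead truncate jumps at $2s$, stop at the first exit of $M'$ from the $s$-ball, and apply \eqref{eq:UMD maximal} with Markov's inequality. Your route is more elementary; the paper's good-$\lambda$ form is what it later reuses, by absorption, for the Lorentz estimates.

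Steps (i)--(iii) are fine. A ucp limit of local martingales with jumps bounded by $2s$ is again a local martingale (localize at exit times and use dominated convergence). Alternatively, run (ii) on the approximants and pass to the limit, as the paper does.

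Step (iv), however, is wrong as written, because $A_\infty$ also contains compensator mass accumulated \emph{after} the first big jump. Take jumps $\pm x_0$ with $\|x_0\|>2s$ for both $M$ and $N$, driven by independent clocks with a common intensity. This intensity equals $\varepsilon$ on $[0,1]$ before the first big jump of $M$, $L$ on the unit time interval after it, and $0$ otherwise. Then $\P(M\neq M')=1-e^{-\varepsilon}$, while both $\mathbb E(1-e^{-A_\infty})$ and $\P(N\neq N')$ tend to $1-e^{-2\varepsilon}$ as $L\to\infty$. So your lower bound fails, and the ratio tends to $2>e/(e-1)$.

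The upper bound $\P(N\neq N')\le\mathbb E(A_\infty\wedge 1)$ also fails without Lenglart's extra term. Take intensity $1$ on $[0,1]$, switched off after the first jump, and intensity $L$ on $(1,2]$ if no jump has occurred by time $1$.

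A correct replacement runs as follows.
\begin{itemize}
\item Lenglart's inequality gives $\P(N\neq N')\le\mathbb E(A_\infty\wedge1)+\P(A_\infty\ge1)\le 2\,\mathbb E(A_\infty\wedge 1)$.
\item Let $\sigma$ be the first big-jump time of $M$. Then $\mathbb E A_\sigma=\P(\sigma<\infty)$ and $A_\infty\wedge1\le\mathbf 1_{\{\sigma<\infty\}}+A_\sigma\mathbf 1_{\{\sigma=\infty\}}$.
\item Hence $\P(N\neq N')\le 4\,\P(M\neq M')$. The paper simply invokes \cite[Lemma 5.12]{Yar20} here.
\end{itemize}
Your split then gives $\P(N^*>t)\le 6\,\P(M^*>s)+3^pC_{p,V}^p s^p/t^p$, which is still below $8^{p\vee1}C_{p,V}^p(s^p/t^p+\P(M^*>s))$.

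\paragraph{Converse direction.} This is where the real gap is. By taking $s=\|f^*\|_\infty$ you discard the term $\P(f^*>s)$. The statement you then need is that a non-UMD space has Paley--Walsh pairs with $\|f\|_\infty\le 1$ and $g$ huge with probability close to $1$. That is not what Burkholder's construction gives. His examples (\cite[p.~999]{Burk81}, also used in the paper's ucp corollary) have $g^*>1$ a.s.\ and $\|f_\infty\|_1$ small, not $\|f\|_\infty$ small. Concatenating copies to raise the probability does not turn $L^1$-smallness into $L^\infty$-smallness, and the weak-type ``fallback'' is not spelled out.

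The missing idea is Doob's maximal inequality with $s$ kept free, exactly as in the paper. Since $\P(f^*>s)\le\mathbb E\|f_\infty\|/s$, the event $g^*>1$ a.s.\ forces $1\le\widetilde C_{p,V}(s^p+\mathbb E\|f_\infty\|/s)$. Choosing $s=(\mathbb E\|f_\infty\|)^{1/(p+1)}$ gives $\mathbb E\|f_\infty\|\ge(2\widetilde C_{p,V})^{-(p+1)/p}$. This is precisely the hypothesis under which Burkholder's argument in the proof of \cite[Theorem 2.1]{Burk81} yields the Paley--Walsh $L^p$ inequalities, hence UMD.

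If you want to keep your $L^\infty$ formulation, you must stop Burkholder's examples at $\tau=\inf\{n:\|f_n\|>\eta\ \text{or}\ \|d_{n+1}\|>2\eta\}$. This is a stopping time because $\|d_{n+1}\|=\|\phi_n\|$ is $\mathcal F_n$-measurable. It gives $\|(f^\tau)^*\|_\infty\le 3\eta$ and $\P(\tau<\infty)\le\P(f^*>\eta)\le\mathbb E\|f_\infty\|/\eta$. Choosing $\eta=(\mathbb E\|f_\infty\|)^{1/2}$ and $K=1/(3\eta)$ then contradicts $\P(g^*>K\|f^*\|_\infty)\le\widetilde C_{p,V}K^{-p}$. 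In either version, allow predictable signs $v_{n-1}$ as in \eqref{eq:PW_marting_f_g}; the resulting pairs are still tangent and conditionally symmetric.
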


\begin{proof}
 {\em The ``only if'' part.} Let $V$ be UMD, $M$, $N$ be as stated above. Fix $t,s>0$. If $t\leq 8s$, then one has that as $C_{p, V}\geq 1$
 \begin{equation}\label{eq:decouplingL0toL0ineq}
8^{p\vee 1}C_{p, V}^p \Bigl(\frac {s^p}{t^p} + \mathbb P(M^*>s)\Bigr) \geq 8^{p\vee 1} \Bigl(\frac 1{8^p} + \mathbb P(M^*>s)\Bigr) \geq 1 \geq  \mathbb P(N^*>t). 
 \end{equation}

Now assume that $t> 8s$.   Fix $p_0> 0$ and $\lambda>0$. By Proposition \ref{prop:goodlambdapdQlc}, we have
 \begin{equation*}
 \begin{split}
    \mathbb P(N^*>\beta\lambda) &=  \mathbb P(N^*>\beta\lambda, \Delta M^* \vee \Delta N^* \vee M^* \leq \delta \lambda)\\
    &\;\;\;\;+  \mathbb P(N^*>\beta\lambda, \Delta M^* \vee\ \Delta N^* \vee M^* > \delta \lambda)\\
    &\leq \eps \mathbb P(N^*>\lambda) + \mathbb P( \Delta M^* \vee \Delta N^* \vee M^* > \delta \lambda)\\
    &\leq \eps \mathbb P(N^*>\lambda) + \mathbb P( \Delta M^*  > \delta \lambda) + \mathbb P( \Delta N^*  > \delta \lambda)+ \mathbb P(  M^* > \delta \lambda)\\
    &\stackrel{(*)}\leq \eps \mathbb P(N^*>\lambda) +7 \mathbb P( \Delta M^*  > \delta \lambda) + \mathbb P(  M^* > \delta \lambda)\\
     &\stackrel{(**)}\leq \eps \mathbb P(N^*>\lambda) +8 \mathbb P( M^*  > \delta \lambda/2) ,
 \end{split}
 \end{equation*}
where $\eps:= \frac{2^{p_0}\delta^{p_0}C_{p_0, V}^{p_0}}{(\beta-\delta-1)^{p_0}}$, $(*)$ follows from \cite[Lemma 5.12]{Yar20} (see also \cite[Lemma 2.3.3]{dlPG}), and $(**)$ holds by $\mathbb P( \Delta M^*  > \delta \lambda) \leq \mathbb P( M^*  > \delta \lambda/2)$ and $\mathbb P( M^*  > \delta \lambda) \leq \mathbb P(  M^*  > \delta \lambda/2)$. Thus, for every $p_0\in(0,\infty)$, $\delta>0$, and $\beta>\delta+1$, one has
\begin{equation}\label{eq:lorentz_goodlambda_start}
\P(N^*/\beta>\lambda)\leq \varepsilon \P(N^*>\lambda)+8\P(2M^*/\delta>\lambda),
\qquad \lambda>0.
\end{equation}

Let us fix $p_0=p$, $\delta=1$, and set $\lambda=2s$ and $\beta=t/\lambda = t/(2s)$, so that \eqref{eq:lorentz_goodlambda_start} yields
 \begin{equation*}
  \begin{split}
  \mathbb P(N^*>t)&\leq \frac{2^p C_{p, V}^p}{(\beta-2)^p} + 8  \mathbb P(M^*>s) = \frac{(4s)^p C_{p, V}^p}{(t-4s)^p} + 8  \mathbb P(M^*>s)\\
  & \stackrel{(*)} \leq \frac{(8s)^p C_{p, V}^p}{t^p} + 8  \mathbb P(M^*>s)  \stackrel{(**)}\leq 8^{p\vee 1}C_{p,V}^p\Bigl(\frac {s^p}{t^p} + \mathbb P(M^*>s)\Bigr),
  \end{split}
 \end{equation*}
 where $(*)$ holds as $t-4s>t/2$ and $(**)$ follows from $C_{p,V}\geq 1$.

  {\em The ``if'' part.} Fix $p\in (0,\infty)$ and assume that \eqref{eq:co.decoupling_L_0} holds for any $V$-valued tangent conditionally symmetric processes $M$ and $N$. In particular,  it holds for Paley-Walsh martingales $f$ and $g$ of the following form:
  \begin{equation}\label{eq:PW_marting_f_g}
  f_k=\sum_{n=1}^k d_n,\;\;\; g_k=\sum_{n=1}^k  v_{n-1}(r_1, \ldots,r_{n-1}) d_n, \;\;\; k\geq 1,
  \end{equation}
where $(r_n)_{n\geq 1}$ is a sequence of independent Rademacher random variables, and $d_n=r_n\phi_{n-1}(r_1, \ldots,r_{n-1})$ for some  $\phi_{n-1}:\{-1,1\}^{n-1}\to  V$, and where $v_{n-1}:\{-1,1\}^{n-1}\to\{-1,1\}$ (see \cite{Burk81}). If in this case $g^*>1$ a.s., then an application of Doob's maximal inequality yields
\begin{equation*}
\mathbb P(g^*>1)\leq \widetilde C_{p, V}(s^p+\mathbb P(f^*>s)) \leq \widetilde C_{p, V}(s^p+\mathbb E \|f_{\infty}\|/s),\;\;\; s>0,
\end{equation*}
consequently, if we set $s=(\mathbb E \|f_{\infty}\|)^{\frac{1}{p+1}}$, then the latter inequality yields 
\begin{equation}\label{eq:f_infty_L1_est_in_terms_of_g*}
\mathbb E \|f_{\infty}\| \geq \left(\frac{\mathbb P(g^*>1)}{2\widetilde C_{p, V}}\right)^{\frac{p+1}{p}},
\end{equation}
and in particular
\begin{equation*}
 g^*> 1\; \text{a.s.}\Longrightarrow\mathbb E \|f_{\infty}\| \geq 1/(2\widetilde C_{p, V})^{\frac{p+1}{p}}.
\end{equation*}
Analogously to the proof of \cite[Theorem 2.1]{Burk81}, the latter inequality leads to $L^p$ estimates of the form \eqref{eq:UMD_beta} for Paley-Walsh $f$ and $g$ defined by \eqref{eq:PW_marting_f_g} for any $p\in(0,\infty)$. These $L^p$ inequalities are equivalent to UMD (see e.g.\  \cite[Theorem 4.2.5]{HNVW1}).
\end{proof}

\begin{remark}\label{re:L0_paley_walsh}
    Notice that it was sufficient to consider Paley-Walsh martingales in the ``if'' part of the previous proof. In particular, it means that Theorem \ref{thm.decoupling_L_0} remains true if we replace ``tangent conditionally symmetric processes'' with ``tangent Paley-Walsh martingales''.
\end{remark}

As Paley-Walsh martingales can be recovered from very different noise families (e.g.\ discrete, continuous, or purely discontinuous), the $L^0$ to $L^0$ decoupling inequalities above happen to characterize the UMD property when working solely with these families. We start with the following discrete version, the proof of which is completely analogous to that of Theorem~\ref{thm.decoupling_L_0}.

\begin{proposition}\label{prop.decoupling_L_0_disc}
  Let $V$ be a Banach space, $p>0$. Then $V$ is UMD if and only if there exists a constant $\widetilde C_{p, V}$ depending only on $p$ and $V$ such that for any $V$-valued tangent conditionally symmetric discrete processes $(f_n)_{n\geq 1}$ and $(g_n)_{n\geq 1}$ one has
 \begin{equation}\label{eq:L0_dec_discr_loc_mart}
  \mathbb P(g^*>t)\leq \widetilde C_{p, V} \Bigl(\frac {s^p}{t^p} + \mathbb P(f^*>s)\Bigr),\;\;\; t,s>0,
 \end{equation}
 where $f^*:= \sup_{n\geq 1}\|f_n\|$, $g^*:= \sup_{n\geq 1}\|g_n\|$.
\end{proposition}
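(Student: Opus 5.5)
The plan is to reduce both directions to Theorem~\ref{thm.decoupling_L_0} and Remark~\ref{re:L0_paley_walsh} by embedding discrete processes into continuous time. Given $V$-valued tangent conditionally symmetric discrete processes $(f_n)_{n\geq 1}$ and $(g_n)_{n\geq 1}$ with respect to a filtration $(\mathcal F_n)_{n\geq 1}$, define the piecewise constant processes
\[
M_t:=f_{\lfloor t\rfloor\vee 1},\qquad N_t:=g_{\lfloor t\rfloor\vee 1},\qquad t\geq 0,
\]
adapted to $\mathcal G_t:=\mathcal F_{\lfloor t\rfloor\vee 1}$; this filtration satisfies the usual conditions after augmentation. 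These are purely discontinuous c\`adl\`ag processes with jumps only at integer times, and $M^*=f^*$, $N^*=g^*$. The key check is that $M$ and $N$ are tangent conditionally symmetric processes in the sense of Section~2.

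\emph{Checking the embedding.} The compensator of the jump measure of $M$ is supported on integer times. At time $n$ it equals the conditional law of $d^f_n:=f_n-f_{n-1}$ given $\mathcal F_{n-1}$, restricted to $V\setminus\{0\}$. Hence discrete tangency, meaning $\mathcal L(d^f_n\mid\mathcal F_{n-1})=\mathcal L(d^g_n\mid\mathcal F_{n-1})$, gives $\nu^M=\nu^N$. Conditional symmetry gives $\nu^M=\nu^{-M}$. The truncations $M^k$ from \eqref{eq:MndefforMpdqlcnotmart} are sums of conditionally symmetric truncated increments $d^f_n\mathbf 1_{1/k\le\|d^f_n\|\le k}$. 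These increments are bounded, so their conditional means vanish by symmetry and $M^k$ is a local martingale. On $[0,T]$ only finitely many increments occur, so $M^k\to M$ in ucp, and likewise for $N^k\to N$.

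\emph{Both directions.} For the ``only if'' part, we apply Theorem~\ref{thm.decoupling_L_0} to $M$ and $N$ and obtain \eqref{eq:L0_dec_discr_loc_mart} with the same constant $8^{p\vee1}C_{p,V}^p$. For the ``if'' part, note that the Paley--Walsh martingales \eqref{eq:PW_marting_f_g} are tangent conditionally symmetric discrete processes. Indeed, given $r_1,\dots,r_{n-1}$, the increment $d_n=r_n\phi_{n-1}$ is symmetric, and $v_{n-1}d_n$ has the same conditional law because $v_{n-1}=\pm1$ is predictable and $r_n$ is symmetric. The ``if'' argument of Theorem~\ref{thm.decoupling_L_0} used only such martingales, via Doob's inequality and Burkholder's extrapolation \cite[Theorem 2.1]{Burk81}. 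It therefore applies verbatim, as Remark~\ref{re:L0_paley_walsh} notes, and shows that $V$ is UMD.

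\emph{Main obstacle.} The only real work is the verification that the embedded process satisfies the paper's definition of conditionally symmetric process, in particular the local martingale property of $M^k$. If the $d^f_n$ are not integrable, one argues via the truncation: bounded symmetric conditional laws have zero mean, so each truncated sum is a genuine martingale. Alternatively, one can bypass the continuous-time embedding altogether. In that case one reruns the good-$\lambda$ argument of Proposition~\ref{prop:goodlambdapdQlc} in discrete time, using discrete stopping times and the discrete decoupling inequality \eqref{eq:UMD maximal}, and then repeats the computation from the proof of Theorem~\ref{thm.decoupling_L_0} unchanged.
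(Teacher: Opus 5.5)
Your ``if'' part is exactly the paper's argument. For the ``only if'' part you take a different route. The paper only says the proof is completely analogous to that of Theorem~\ref{thm.decoupling_L_0}, i.e.\ one redoes the good-$\lambda$ inequality of Proposition~\ref{prop:goodlambdapdQlc} with discrete stopping times and then the same tail computation; this is your fallback option. You instead embed the discrete processes into continuous time and apply Theorem~\ref{thm.decoupling_L_0} directly. This is the device \cite[Remark 3.2]{Yar20} that the paper itself uses, in the opposite direction, in the proof of Corollary~\ref{cor:L0_dec_acc_jumps}. It gives the constant $8^{p\vee 1}C_{p,V}^p$ without re-deriving the good-$\lambda$ inequality or a discrete analogue of \cite[Lemma 5.12]{Yar20}. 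The price is checking the more technical continuous-time definition of a conditionally symmetric process, which the discrete rerun avoids.

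That check, however, fails for your embedding as written, because it is off by one. With $M_t:=f_{\lfloor t\rfloor\vee 1}$ and $\mathcal G_t:=\mathcal F_{\lfloor t\rfloor\vee 1}$, the first increment $d^f_1=f_1$ becomes the $\mathcal G_0$-measurable initial value $M_0$ rather than a jump at a positive time. With the usual convention $M_{0-}=M_0$, one has $M^c_0=0$ and $\Delta M_0=0$. So the truncations \eqref{eq:MndefforMpdqlcnotmart} satisfy $M^k_0=0$, hence $\sup_{t\leq T}\|M^k_t-M_t\|\geq\|f_1\|$. Your claim $M^k\to M$ in ucp is therefore false whenever $f_1\neq 0$. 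If one instead counted $f_1$ as a jump at time $0$, its compensator would be the point mass at the $\mathcal G_0$-measurable $f_1$, so conditional symmetry would fail. Either way $M$ and $N$ are not conditionally symmetric processes in the paper's sense, and Theorem~\ref{thm.decoupling_L_0} does not apply.

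This is not cosmetic. Local characteristics do not see initial values, so the tangency of the first increments is lost. No estimate of the form \eqref{eq:co.decoupling_L_0} can hold for processes with unrelated initial values: for instance $M\equiv 0$ and $N\equiv x\neq 0$ have the same local characteristics.

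The repair is immediate:
\begin{itemize}
\item set $f_0=g_0=0$;
\item let $\mathcal F_0$ be the $\sigma$-algebra given which $d^f_1$ and $d^g_1$ have the same symmetric conditional law (trivial for Paley--Walsh martingales);
\item put $M_t:=f_{\lfloor t\rfloor}$, $N_t:=g_{\lfloor t\rfloor}$ and $\mathcal G_t:=\mathcal F_{\lfloor t\rfloor}$.
\end{itemize}
Then $\Delta M_n=d^f_n$ and $\mathcal G_{n-}=\mathcal F_{n-1}$ for every $n\geq 1$, and $M^*=f^*$, $N^*=g^*$. The rest of your verification goes through verbatim.
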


Now we can prove Theorem \ref{thm.decoupling_L_0} for different components of the so-called canonical decomposition, i.e.\ the decomposition of a local martingale into the ``Wiener-like'' continuous part, ``Poisson-like'' quasi-left continuous purely discontinuous part, and ``discrete-like''  purely discontinuous part with accessible jumps (see \cite{Y17GMY,Y17MartDec}). First we show the version for purely discontinuous processes with accessible jumps, where the ``if''  part will be shown to follow from Proposition \ref{prop.decoupling_L_0_disc}. Recall that a conditionally symmetric process $M:\mathbb R_+\times \Omega \to V$ is said to be {\em purely discontinuous} if the continuous part $M^c$ of $M$ is zero, and $M$ is said to have {\em accessible jumps} if $\Delta M_{\tau}=0$ a.s.\ for any finite inaccessible stopping time $\tau$ (i.e.\ for any  stopping time $\tau$ such that $\mathbb P(\tau=\sigma)=0$ for any predictable stopping time $\sigma$, see \cite[Chapter 10]{Kal}).

\begin{corollary}\label{cor:L0_dec_acc_jumps}
 Let $V$ be a Banach space, $p>0$. Then $V$ is UMD if and only if there exists a constant $\widetilde C_{p, V}$ depending only on $p$ and $V$ such that for any $V$-valued tangent purely discontinuous conditionally symmetric processes $M$ and $N$ with accessible jumps one has that
 \begin{equation}\label{eq:cor_L0_dec_acc_jumps}
  \mathbb P(N^*>t)\leq \widetilde C_{p, V} \Bigl(\frac{s^p}{t^p} + \mathbb P(M^*>s)\Bigr),\;\;\; t,s>0.
 \end{equation}
\end{corollary}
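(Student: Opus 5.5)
The ``only if'' direction needs no new argument. Purely discontinuous conditionally symmetric processes with accessible jumps are in particular conditionally symmetric processes, and tangency has the same meaning. So if $V$ is UMD, Theorem \ref{thm.decoupling_L_0} gives \eqref{eq:cor_L0_dec_acc_jumps} directly, with the same constant $\widetilde C_{p,V}\le 8^{p\vee 1}C_{p,V}^p$.

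For the ``if'' direction I will reduce to Proposition \ref{prop.decoupling_L_0_disc}. By Remark \ref{re:L0_paley_walsh}, and since the proof of that proposition is analogous to that of Theorem \ref{thm.decoupling_L_0}, it suffices to verify \eqref{eq:L0_dec_discr_loc_mart} for Paley-Walsh martingales $f,g$ as in \eqref{eq:PW_marting_f_g}. The discrete-to-continuous embedding goes as follows.
\begin{enumerate}
\item Given the Rademacher sequence $(r_n)_{n\ge1}$ and its natural filtration $(\mathcal G_n)_{n\geq 0}$, define the continuous-time filtration $\mathcal F_t:=\mathcal G_{\lfloor t\rfloor}$. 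This filtration is right-continuous, and after completion it satisfies the usual conditions.
\item Define the piecewise constant processes $M_t:=f_{\lfloor t\rfloor}$ and $N_t:=g_{\lfloor t\rfloor}$, with $f_0=g_0=0$.
\item Check the required properties. Both processes are càdlàg, and they are martingales because their values are bounded (finitely many values at each time). Their jumps occur only at the deterministic integer times $n$, which are predictable stopping times, so the jumps are accessible. Being piecewise constant, they are purely discontinuous.
\end{enumerate}
Under this embedding $M^*=f^*$ and $N^*=g^*$.

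The step needing the most care is tangency and conditional symmetry. Here $\nu^M$ equals the conditional law of $\Delta M_n=d_n$ given $\mathcal G_{n-1}$, placed at the time $n$. Since $d_n=r_n\phi_{n-1}$ with $\phi_{n-1}$ being $\mathcal G_{n-1}$-measurable, this conditional law is $\frac12(\delta_{\phi_{n-1}}+\delta_{-\phi_{n-1}})$. It is symmetric, and because $|v_{n-1}|=1$ it coincides with the conditional law of $v_{n-1}d_n$. Hence $\nu^M=\nu^{-M}=\nu^N$. Both continuous parts vanish, and $M^n=M$ once $n$ is large relative to the (finitely many, nonzero) jump sizes up to each time, so the ucp approximation in the definition of conditionally symmetric processes is trivial. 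This is where one should rely on the standard identification of compensators for jumps at predictable times (see \cite{JS,Yar20}).

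Applying \eqref{eq:cor_L0_dec_acc_jumps} to $M$ and $N$ therefore gives \eqref{eq:L0_dec_discr_loc_mart} for $f$ and $g$. The ``if'' part of the proof of Theorem \ref{thm.decoupling_L_0}, via \eqref{eq:f_infty_L1_est_in_terms_of_g*} and Burkholder's argument \cite{Burk81}, then yields UMD.
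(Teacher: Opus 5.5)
Your proposal is correct and follows the paper's argument. The paper also gets the ``only if'' part directly from Theorem~\ref{thm.decoupling_L_0}, and for the converse it realizes discrete conditionally symmetric processes as purely discontinuous conditionally symmetric processes with accessible jumps at the integer times (citing \cite[Remark 3.2]{Yar20}) before invoking Proposition~\ref{prop.decoupling_L_0_disc}. You instead carry out this embedding explicitly for Paley--Walsh martingales, which suffices by Remark~\ref{re:L0_paley_walsh}, and compute the compensator yourself. One small correction: boundedness only gives integrability, and the martingale property of $M$ and of each truncation $M^n$ comes from $\mathbb E(d_k\mathbf 1_{1/n\leq\|d_k\|\leq n}\,|\,\mathcal G_{k-1})=0$, using that $\|d_k\|=\|\phi_{k-1}\|$ is $\mathcal G_{k-1}$-measurable.
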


\begin{proof}
 UMD implies \eqref{eq:cor_L0_dec_acc_jumps} by Theorem \ref{thm.decoupling_L_0}. The converse is true as any discrete conditionally symmetric process can be presented as a purely discontinuous conditionally symmetric process with accessible jumps in times $\{1,2,3,\ldots\}$, see e.g.\ \cite[Remark 3.2]{Yar20}, so \eqref{eq:cor_L0_dec_acc_jumps} implies \eqref{eq:L0_dec_discr_loc_mart}, which in turn implies UMD by Proposition \ref{prop.decoupling_L_0_disc}.
\end{proof}

Now let us show the version for continuous martingales. Note that continuous conditionally symmetric processes are automatically local martingales due to the definition of  conditionally symmetric processes.

\begin{proposition}\label{prop:L0_dec_ineq_cont_mar}
Let $V$ be a Banach space, $p>0$. Then $V$ is UMD if and only if there exists a constant $\widetilde C_{p, V}$ depending only on $p$ and $V$ such that for any $V$-valued tangent continuous local martingales $M$ and $N$ one has that
 \begin{equation}
\label{eq:prop_L0_dec_ineq_cont_mar}
    \mathbb P(N^*>t)\leq  \widetilde C_{p, V} \Bigl(\frac{s^p}{t^p} + \mathbb P(M^*>s)\Bigr),\;\;\; t,s>0.
 \end{equation}
\end{proposition}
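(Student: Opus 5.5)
The ``only if'' direction is immediate from Theorem \ref{thm.decoupling_L_0}. Tangent continuous local martingales are tangent conditionally symmetric processes: a continuous local martingale has zero jump measure, so $M^n=M^c=M$ for all $n$. Moreover, $M$ and $-M$ have the same covariation bilinear form $[\![M]\!]$, and tangency of continuous local martingales is equality of these forms. Hence \eqref{eq:prop_L0_dec_ineq_cont_mar} holds with $\widetilde C_{p,V}\le 8^{p\vee1}C_{p,V}^p$.

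For the ``if'' direction I will follow Remark \ref{re:L0_paley_walsh}: it suffices to obtain \eqref{eq:co.decoupling_L_0} for Paley-Walsh martingales $f,g$ of the form \eqref{eq:PW_marting_f_g}. To do this, I embed them into continuous tangent martingales driven by a Brownian motion. Let $W$ be a standard real Brownian motion and fix times $t_n=n$. Set $r_n:=\operatorname{sign}(W_n-W_{n-1})$; these are independent Rademachers. Then define
\[
M_t=\sum_{n\ge1}\phi_{n-1}(r_1,\dots,r_{n-1})\,\bigl(\mathbb E[r_n\mid \mathcal F_t]-\mathbb E[r_n\mid\mathcal F_{n-1}]\bigr),
\]
and define $N_t$ in the same way with $\phi_{n-1}$ replaced by $v_{n-1}\phi_{n-1}$. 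On $[n-1,n]$ the process $t\mapsto \mathbb E[r_n\mid\mathcal F_t]=h_n(t,W_t-W_{n-1})$ is a continuous martingale, starting at $0$ and ending at $r_n$. Its Itô representation is $\int_{n-1}^t \partial_x h_n\,\mathrm dW$. Consequently
\[
M_t=\int_0^t \Phi_s\,\mathrm dW_s,\qquad N_t=\int_0^t v(s)\Phi_s\,\mathrm dW_s,
\]
with $\Phi$ predictable and $V$-valued, and $v(s)\in\{-1,1\}$ predictable. The covariation forms coincide, $[\![M]\!]_t(x^*,y^*)=\int_0^t\langle\Phi,x^*\rangle\langle\Phi,y^*\rangle\,\mathrm ds=[\![N]\!]_t(x^*,y^*)$, since $v^2=1$. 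So $M$ and $N$ are tangent continuous local martingales, and $M_n=f_n$, $N_n=g_n$.

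The step I expect to be the main obstacle is comparing maximal functions. Applying \eqref{eq:prop_L0_dec_ineq_cont_mar} to $M,N$ gives a bound on $\mathbb P(N^*>t)$ in terms of $\mathbb P(M^*>s)$. Since $g^*\le N^*$, the left-hand side is handled. On the right, however, we need $\mathbb P(f^*>s)$ rather than $\mathbb P(M^*>s)$, and $M^*\ge f^*$. Two ways around this suggest themselves.

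\emph{Avoid the full comparison.} In the ``if'' part of Theorem \ref{thm.decoupling_L_0}, $\mathbb P(M^*>s)$ was only used through a Doob-type bound $\mathbb P(f^*>s)\le \mathbb E\|f_\infty\|/s$. Doob's inequality applies equally to the continuous martingale $M$, whose terminal value is $M_\infty=f_\infty$. So $\mathbb P(M^*>s)\le \mathbb E\|f_\infty\|/s$ holds directly. The remainder of that proof, namely \eqref{eq:f_infty_L1_est_in_terms_of_g*} and Burkholder's argument \cite[Theorem 2.1]{Burk81}, goes through verbatim with $g^*$ replaced by the larger $N^*$: the hypothesis $g^*>1$ a.s.\ implies $N^*>1$ a.s.

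\emph{Fallback.} Alternatively, one can use the pointwise convexity bound: between integer times, $M_t$ is a convex combination of the two possible values $f_{n-1}\pm\phi_{n-1}$, so $\|M_t\|\le \|f_{n-1}\|+\|\phi_{n-1}\|\le 2\max(\|f_{n-1}\|,\|f_n\|)$, giving $M^*\le 2 f^*$ and hence $\mathbb P(M^*>s)\le\mathbb P(f^*>s/2)$.

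Either route yields UMD via \cite[Theorem 4.2.5]{HNVW1}.
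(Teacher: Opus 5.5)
Your proof is correct. Its skeleton matches the paper's: embed Paley--Walsh martingales into Brownian stochastic integrals, use $g^*\le N^*$ on the left, use Doob's inequality for the continuous martingale $M$ on the right, and then run Burkholder's argument.

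Where you differ is the embedding. The paper uses the exit times $\tau_n$ of $W-W_{\tau_{n-1}}$ from $(-1,1)$. On $[\tau_{n-1},\tau_n]$ this gives $M_t=f_{n-1}+\phi_{n-1}(r_1,\dots,r_{n-1})(W_t-W_{\tau_{n-1}})$. That is an elementary It\^o integral with an $\mathcal F_{\tau_{n-1}}$-measurable integrand, so tangency to $N$ is immediate. The price is that the $\tau_n$ are unbounded. The paper therefore needs optional sampling, martingale convergence and monotone limits in $t$ and $n$ (its steps (i)--(iv)) before Doob and the hypothesis can be applied.

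Your deterministic-time construction sets $M_t=\mathbb E[f_n\mid\mathcal F_t]$ on $[n-1,n]$. It costs you the It\^o (or martingale) representation of the bounded Brownian martingale $h_n(t,W_t-W_{n-1})$ to exhibit $M$ as $\int\Phi\,\mathrm dW$. That step is harmless, since the $V$-valued factor is $\mathcal F_{n-1}$-measurable with finite range. In return, $M$ is a genuine martingale with $M_n=f_n$ at deterministic times, and it is constant after the last step of a finite Paley--Walsh martingale. So Doob's inequality applies directly, with no limiting procedure. It also gives a pathwise comparison with $f^*$, which transfers the full tail inequality to $(f,g)$, not only the consequence used in Burkholder's argument.

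One small correction in your fallback. The inequality $\|\phi_{n-1}\|\le\max(\|f_{n-1}\|,\|f_n\|)$ is false in general; you only have $\|\phi_{n-1}\|=\|d_n\|\le\|f_{n-1}\|+\|f_n\|$. This yields $M^*\le 3f^*$, not $2f^*$. For a counterexample to the factor $2$, take a two-step martingale with $f_1=r_1$ and $\phi_1=2r_1$. On $\{r_2=-1\}$ you have $f^*=1$, while $|M_t|=|1+2h_2(t,W_t-W_1)|$ exceeds $2$ with positive probability. Since any constant suffices, replacing $s/2$ by $s/3$ fixes this, and your main route via Doob is unaffected.
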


In what follows we will use the following notation:  $x\lesssim_{A} y$ for some $x, y\geq 0$ and some set $A$ if there exists a constant $c_{A}>0$ depending only on $A$ so that $x\leq c_{ A}y$. The notations $\gtrsim_{A}$ and $\eqsim_{A}$ are defined similarly.

\begin{proof}[Proof of Proposition \ref{prop:L0_dec_ineq_cont_mar}]
If $V$ is UMD, then \eqref{eq:prop_L0_dec_ineq_cont_mar} follows from Theorem \ref{thm.decoupling_L_0}.

 Now let \eqref{eq:prop_L0_dec_ineq_cont_mar} hold for any continuous tangent local martingales. Then one can show \eqref{eq:prop_L0_dec_ineq_cont_mar} for Paley-Walsh tangent local martingales as the latter can be approximated in law by stochastic integrals with respect to a Brownian motion (see e.g.\ \cite[Proof of Theorem 3.12]{Y17MartDec}). Indeed, let $W:\mathbb R_+\times \Omega \to \mathbb R$ be a standard Brownian motion over the generated filtration $\mathbb F= (\mathcal F_t)_{t\geq 0}$. Set
 \[
 \tau_0:=0,\;\;\;\tau_n:=\inf\{t\geq \tau_{n-1}:|W_t-W_{\tau_{n-1}}|=1\},\;\;\; n\geq 1.
 \]
Let $f$ and $g$ be Paley-Walsh martingales as in the proof of Theorem~\ref{thm.decoupling_L_0}. As a standard Brownian motion is a strongly Markov process, which additionally almost surely hits the set $\{-1,1\}$, $(W_{\tau_n}-W_{\tau_{n-1}})_{n\geq 1}$ are independent $\{-1,1\}$-valued random variables, and as $W$ is a martingale, these random variables are Rademachers. So without loss of generality we can set $r_n=W_{\tau_n}-W_{\tau_{n-1}}$ and define
\[
M_t:=\sum_{n\geq 1}\int_{\tau_{n-1}\wedge t}^{\tau_n\wedge t}\phi_{n-1}(r_1,\ldots,r_{n-1})\ud W_t,\;\;\; t\geq 0,
\]
\[
N_t:=\sum_{n\geq 1}\int_{\tau_{n-1}\wedge t}^{\tau_n\wedge t}v_{n-1}(r_1,\ldots,r_{n-1})\phi_{n-1}(r_1,\ldots,r_{n-1})\ud W_t,\;\;\; t\geq 0.
\]
First, $M$ and $N$ are continuous local martingales, as the integrands in the $n$-th terms of the sums above are $\mathcal F_{\tau_{n-1}}$-measurable. Second, $M$ and $N$ are tangent since $|v_{n-1}|=1$, so $v_{n-1}$ does not affect the quadratic variation, and thus the local characteristics. Finally, $M_{\tau_n}=f_n$ and $N_{\tau_n}=g_n$ a.s.\ for any $n\geq 1$ as almost surely
\begin{multline*}
\int_{\tau_{n-1}}^{\tau_n}\phi_{n-1}(r_1,\ldots,r_{n-1})\ud W_t\\ = \phi_{n-1}(r_1,\ldots,r_{n-1})(W_{\tau_n}-W_{\tau_{n-1}})
=r_n \phi_{n-1}(r_1,\ldots,r_{n-1})=d_n,
\end{multline*}
similarly for $v_{n-1}(r_1,\ldots,r_{n-1})d_n$; consequently, $g^* = \sup_{n\geq 1}\|N_{\tau_n}\|\leq N^*$, and thus
\begin{equation*}
\begin{aligned}
\mathbb E\|f_{\infty}\| &= \lim_{n\to \infty} \mathbb E\|f_{n}\| = \lim_{n\to \infty} \mathbb E \|M_{\tau_n}\| \stackrel{(i)}=  \lim_{n\to \infty}\lim_{t\to \infty} \mathbb E\|M_{\tau_n\wedge t}\|\\
&\stackrel{(ii)}\gtrsim_{p, V}\liminf_{n\to \infty}\liminf_{t\to \infty}(\mathbb P((N^{\tau_n\wedge t})^*>1))^{\frac{p+1}{p}} = \stackrel{(iii)}\liminf_{n\to \infty}(\mathbb P((N^{\tau_n})^*>1))^{\frac{p+1}{p}}\\
&\stackrel{(iv)}=(\mathbb P(N^*>1))^{\frac{p+1}{p}} \geq(\mathbb P(g^*>1))^{\frac{p+1}{p}},
\end{aligned}
\end{equation*}
where $(i)$ holds by the optional sampling theorem (see e.g.\ \cite[Theorem 1.3.22]{KS}), so $M_{\tau_n\wedge t}=\mathbb E (M_{\tau_n}|\mathcal F_t)$  and by martingale convergence \cite[Theorem 3.3.2]{HNVW1} due to integrability of $M_{\tau_n}$ and the fact that $\tau_n<\infty$ a.s.,  $(ii)$ follows from the argument leading to  \eqref{eq:f_infty_L1_est_in_terms_of_g*},  and $(iii)$ and $(iv)$ hold as both $t\mapsto \mathbb P((N^{\tau_n\wedge t})^*>1)$, $t\geq 0$,  and $(\mathbb P((N^{\tau_n})^*>1))_{n\geq 1}$
are monotonically non-decreasing.  Thus $g^*>1$ a.s.\ implies $\mathbb E\|f_{\infty}\| \gtrsim_{p, V} 1$, which leads to the UMD property similarly to the ``if'' part of Theorem~\ref{thm.decoupling_L_0}.
\end{proof}

Finally, the most complicated but from the authors' point of view the most interesting purely discontinuous quasi-left continuous version. Recall that a conditionally symmetric process $M:\mathbb R_+\times \Omega \to V$ is called quasi-left continuous if $\Delta M_{\tau}=0$ almost surely for every finite predictable stopping time $\tau$ (see e.g.\ \cite[Chapter 10]{Kal}).

\begin{proposition}\label{prop:L0_dec_ipd_qlc_process}
 Let $V$ be a Banach space, $p>0$. Then $V$ is UMD if and only if there exists a constant $\widetilde C_{p, V}$ depending only on $p$ and $V$ such that for any $V$-valued tangent purely discontinuous quasi-left continuous conditionally symmetric processes $M$ and $N$ one has that
 \begin{equation}\label{eq:cor_L0_dec_pd_qlc_process}
  \mathbb P(N^*>t)\leq \widetilde C_{p, V} \Bigl(\frac{s^p}{t^p} + \mathbb P(M^*>s)\Bigr),\;\;\; t,s>0.
 \end{equation}
\end{proposition}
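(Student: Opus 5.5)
The ``only if'' direction is immediate from Theorem~\ref{thm.decoupling_L_0}, since purely discontinuous quasi-left continuous tangent conditionally symmetric processes are a special case. For the converse, I will follow the template of Proposition~\ref{prop:L0_dec_ineq_cont_mar}. The plan is to realize Paley-Walsh martingales $f,g$ as in \eqref{eq:PW_marting_f_g}, sampled at suitable stopping times, from tangent purely discontinuous quasi-left continuous processes. The argument leading to \eqref{eq:f_infty_L1_est_in_terms_of_g*} then gives that $g^*>1$ a.s.\ implies $\mathbb E\|f_\infty\|\gtrsim_{p,V}1$. Burkholder's argument \cite[Theorem 2.1]{Burk81} then yields the Paley-Walsh $L^p$ inequalities and hence UMD.

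For the construction, I would take a standard Poisson process $\Pi$ together with independent Rademacher marks, i.e.\ a compound Poisson process $L_t=\sum_{k\le \Pi_t}\xi_k$ with $\xi_k$ i.i.d.\ symmetric $\pm1$. This is a quasi-left continuous purely discontinuous martingale with compensator $\frac12(\delta_1+\delta_{-1})(\dd x)\ud t$. Let $\tau_n$ be the $n$-th jump time of $\Pi$; these are totally inaccessible, and $r_n:=\Delta L_{\tau_n}=\xi_n$ are independent Rademachers with $r_n$ independent of $\mathcal F_{\tau_n-}$. Define
\[
M_t:=\sum_{n\ge1}\int_{(\tau_{n-1}\wedge t,\tau_n\wedge t]}\phi_{n-1}(r_1,\ldots,r_{n-1})\ud L_s,
\]
and define $N_t$ analogously with integrand $v_{n-1}\phi_{n-1}$. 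On $(\tau_{n-1},\tau_n]$ the process $L$ has a single jump, at $\tau_n$, so $M_{\tau_n}=f_n$ and $N_{\tau_n}=g_n$. Moreover, $M$ and $N$ are constant between jumps, so $M^*=f^*$ and $N^*=g^*$, which avoids the limiting argument used in the continuous case. Both processes are purely discontinuous and quasi-left continuous, since their jumps occur only at the $\tau_n$. Their jump compensators are
\[
\sum_n\mathbf 1_{(\tau_{n-1},\tau_n]}(t)\,\tfrac12\bigl(\delta_{\phi_{n-1}}+\delta_{-\phi_{n-1}}\bigr)(\dd x)\ud t.
\]
This measure is symmetric and unchanged when $\phi_{n-1}$ is multiplied by $v_{n-1}\in\{-1,1\}$, so $M$ and $N$ are conditionally symmetric and tangent.

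The main obstacle is the technical check that $M$ and $N$ qualify as processes in the sense of Section~2. Since $\phi_{n-1}$ takes finitely many values, jumps of size outside $[1/n,n]$ are absent once $n$ is large on each finite horizon, so the truncations $M^n$ converge to $M$ in ucp. The processes are also local martingales, localized by the $\tau_n$ and with bounded jumps up to $\tau_n$. I also need to verify that the integrands are predictable; this holds because the integrand on $(\tau_{n-1},\tau_n]$ is $\mathcal F_{\tau_{n-1}}$-measurable. Inserting $M,N$ into \eqref{eq:cor_L0_dec_pd_qlc_process} with $t=1$ and $s=(\mathbb E\|f_\infty\|)^{1/(p+1)}$, and using $\mathbb P(M^*>s)\le \mathbb E f^*/s$ together with Doob's inequality, as in the ``if'' part of Theorem~\ref{thm.decoupling_L_0}, completes the proof.
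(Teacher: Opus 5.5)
Your proposal is correct and is essentially the paper's proof. The paper takes $P=Q-R$ for two independent standard Poisson processes, which is exactly a compound Poisson process with Rademacher marks. It sets $M_t=\sum_{n\ge1}\Delta P_{\tau_n}\phi_{n-1}(\Delta P_{\tau_1},\ldots,\Delta P_{\tau_{n-1}})\mathbf 1_{t\ge\tau_n}$, defines $N$ analogously, computes the same symmetric, sign-invariant compensator, and transfers \eqref{eq:cor_L0_dec_pd_qlc_process} to $(f,g)$ via $M^*=\sup_n\|M_{\tau_n}\|$. Your stochastic-integral formulation and your explicit check of the ucp truncation requirement are only presentational differences.

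One small slip: the bound you need is Doob's weak-type inequality $\mathbb P(f^*>s)\le \mathbb E\|f_\infty\|/s$, applied directly. Going through Markov's bound $\mathbb E f^*/s$ followed by Doob's inequality does not work, because $\mathbb E f^*$ is not controlled by $\mathbb E\|f_\infty\|$.
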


\begin{proof}
If $V$ is UMD, then \eqref{eq:cor_L0_dec_pd_qlc_process} holds by Theorem \ref{thm.decoupling_L_0}.

Conversely, let us assume that \eqref{eq:cor_L0_dec_pd_qlc_process} holds for all $V$-valued tangent purely discontinuous quasi-left continuous conditionally symmetric processes. Our goal is to prove that this implies that the same inequality holds for Paley-Walsh martingales $f$ and $g$ as defined in \eqref{eq:PW_marting_f_g}.

Let $Q,R:\mathbb R_+\times \Omega\to\mathbb R$ be two independent standard Poisson processes, $P:=Q-R$, and let $0<\tau_1<\ldots<\tau_k<\ldots$ be stopping times exhausting all the jumps of $P$ with $\Delta P_{\tau_k}\in\{-1,1\}$ a.s.\ for any $k\geq 1$. Additionally, let $(v_n)$ and $(\phi_n)$ be as in the proof of Theorem \ref{thm.decoupling_L_0}. For any $t\geq 0$ set
\[
M_t:=\sum_{n\geq 1} \Delta P_{\tau_n}\phi_{n-1}( \Delta P_{\tau_1}, \ldots, \Delta P_{\tau_{n-1}})\mathbf 1_{t\geq \tau_n},
\]
and
\[
N_t:=\sum_{n\geq 1} \Delta P_{\tau_n} v_{n-1}( \Delta P_{\tau_1}, \ldots, \Delta P_{\tau_{n-1}})\phi_{n-1}( \Delta P_{\tau_1}, \ldots, \Delta P_{\tau_{n-1}})\mathbf 1_{t\geq \tau_n}.
\]

First, we claim that $M$ and $N$ are martingales. Indeed, for every $n\geq 1$, both $P^{\tau_n}$ and $P^{\tau_{n-1}}$ are martingales by the optional sampling theorem \cite[Theorem 9.12]{Kal}. Hence their difference $t\mapsto P^{\tau_n}_t-P^{\tau_{n-1}}_t=\Delta P_{\tau_n}\mathbf 1_{t\geq \tau_n}$ is also a martingale. Therefore, in order to conclude that $M$ and $N$ are martingales, it is enough to observe that if $\xi$ is a bounded $\mathcal F_{\tau_n}$-measurable random variable such that $t\mapsto \xi\mathbf 1_{t\geq \tau_n}$ is a martingale, and if $\eta$ is a bounded $\mathcal F_{\tau_{n-1}}$-measurable random variable, then $t\mapsto \eta\xi \mathbf 1_{t\geq \tau_n}$ is again a martingale. To see this, let $0\leq s\leq t$. Then
\begin{align*}
\mathbb E (\eta\xi \mathbf 1_{t\geq \tau_n}|\mathcal F_s)
&=\mathbb E (\mathbb E(\eta\xi \mathbf 1_{t\geq \tau_n}|\mathcal F_{s\vee \tau_{n-1}})|\mathcal F_s) \\
&\stackrel{(i)}= \mathbb E \eta (\mathbb E(\xi \mathbf 1_{t\geq \tau_n}|\mathcal F_{s\vee \tau_{n-1}})|\mathcal F_s)\\
&\stackrel{(ii)}= \mathbb E (\eta\xi \mathbf 1_{s\vee \tau_{n-1}\geq \tau_n}|\mathcal F_s)\stackrel{(iii)}=\mathbb E (\eta\xi \mathbf 1_{s\geq \tau_n}|\mathcal F_s)\stackrel{(iv)}=\eta\xi \mathbf 1_{s\geq \tau_n},
\end{align*}
where $(i)$ follows from \cite[Lemma 9.1 (i) and (iii)]{Kal}, $(ii)$ holds since $t\mapsto \xi \mathbf 1_{t\geq \tau_n}$ is a martingale, $(iii)$ follows from $\tau_{n-1}<\tau_n$ a.s., and $(iv)$ follows from \cite[Lemma 9.1 (iii)]{Kal}. Applying this first with $\eta=\phi_{n-1}( \Delta P_{\tau_1}, \ldots, \Delta P_{\tau_{n-1}})$,
and then with
\[
\eta=v_{n-1}( \Delta P_{\tau_1}, \ldots, \Delta P_{\tau_{n-1}})
\phi_{n-1}( \Delta P_{\tau_1}, \ldots, \Delta P_{\tau_{n-1}}),
\]
establishes that all summands in the definitions of $M$ and $N$ are martingales, and hence so are $M$ and $N$.

Second, observe that $M$ and $N$ are purely discontinuous quasi-left continuous. Indeed, this follows from the fact that both are piecewise constant and their jumps occur only at the jump times of $P$, which are totally inaccessible.

Third, $M$ and $N$ are conditionally symmetric and tangent. To see this observe, that if we denote by $\lambda_{I}$ the restriction of the Lebesgue measure to a Borel set $I\subset\mathbb R_+$, and write $\delta_x$ for the Dirac measure at $x\in \mathbb R$, then for each $n\geq 1$, we have almost surely
\begin{equation*}
\begin{aligned}
\nu^{\Delta P_{\tau_n}\mathbf 1_{\cdot\geq \tau_n}}
&=\nu^{P^{\tau_n}-P^{\tau_{n-1}}} =\nu^{P^{\tau_n}-(P^{\tau_{n}})^{\tau_{n-1}}}=\nu^{P^{\tau_n}}-\nu^{(P^{\tau_{n}})^{\tau_{n-1}}}\\
&=\nu^{P}\mathbf 1_{(0, \tau_n]}-\nu^{P}\mathbf 1_{(0, \tau_{n-1}]}
=\nu^{P}\mathbf 1_{(\tau_{n-1}, \tau_n]}
=\lambda_{(\tau_{n-1}, \tau_n]}\otimes(\delta_{-1} + \delta_{1}),
\end{aligned}
\end{equation*}
since $\nu^P=\lambda_{\mathbb R_+}\otimes(\delta_{-1}+\delta_1)$, $\tau_n>\tau_{n-1}$ a.s., and the compensator of a stopped process is a stopped compensator (see e.g.\ \cite[Proposition II.1.30]{JS}). Consequently,
\[
\nu^M
=\sum_{n\geq 1}\lambda_{(\tau_{n-1}, \tau_n]} \otimes (\delta_{-\phi_{n-1}( \Delta P_{\tau_1}, \ldots, \Delta P_{\tau_{n-1}})} + \delta_{\phi_{n-1}( \Delta P_{\tau_1}, \ldots, \Delta P_{\tau_{n-1}})})
\]
almost surely. As the right-hand side of the formula above is symmetric in space and in particular does not change when multiplying  $\phi_{n-1}$ by a $\{-1,1\}$-valued random variable, one can conclude that $\nu^{-M}=\nu^M= \nu^N=\nu^{-N}$, which is equivalent to $M$ and $N$ being tangent conditionally symmetric.

Finally, $(M_{\tau_n})_{n\geq 1}$ and $(N_{\tau_n})_{n\geq 1}$ have the same distributions as $(f_n)_{n\geq 1}$ and $(g_n)_{n\geq 1}$, respectively, since $(\Delta P_{\tau_n})_{n\geq 1}$ is a sequence of independent Rademacher random variables. Indeed, these random variables are $\{-1,1\}$-valued, have zero mean, and $P$ is strongly Markov. Since $M$ and $N$ are constant on each interval $[\tau_{n-1},\tau_n)$, one has
\[
M^*=\sup_{n\geq 1} \|M_{\tau_n}\| \quad \text{and} \quad N^*=\sup_{n\geq 1} \|N_{\tau_n}\| \;\;\text{a.s.}
\]
Hence $M^*$ and $N^*$ have the same distributions as $f^*$ and $g^*$, respectively. Therefore, the estimate \eqref{eq:cor_L0_dec_pd_qlc_process} transfers to the Paley-Walsh martingales $f$ and $g$, and the proof of Theorem \ref{thm.decoupling_L_0} then yields that $V$ is UMD.
\end{proof}

$L^0$ estimates also have the following elementary implications. The first of these establishes that the usual maximal $L^p$ characterization of UMD holds even when $0<p<1$.

\begin{corollary}\label{cor:maximal_0<p<1_equiv_UMD}
The estimate \eqref{eq:UMD maximal} characterizes UMD for any $0<p<\infty$.
\end{corollary}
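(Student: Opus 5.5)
The plan is to prove the two directions separately. The forward direction is already available: if $V$ is UMD, then \eqref{eq:UMD maximal} holds for every $0<p<\infty$, as recalled after \eqref{eq:UMD maximal} (via \cite[Corollary 3.33 and Theorem 5.14]{Yar20}). The real task is the converse: assume \eqref{eq:UMD maximal} for some fixed $p\in(0,\infty)$, with some finite constant $C_{p,V}$, for all tangent conditionally symmetric martingales $M$ and $N$, and show that $V$ is UMD. For $p\ge1$ this is \cite[Corollary 3.33]{Yar20}, so the new content is $0<p<1$. The strategy is to derive the tail estimate \eqref{eq:co.decoupling_L_0} of Theorem \ref{thm.decoupling_L_0} from the assumed $L^p$ bound and then invoke the ``if'' part of that theorem.

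By Remark \ref{re:L0_paley_walsh}, it suffices to obtain \eqref{eq:co.decoupling_L_0} only for Paley-Walsh martingales $f$ and $g$ as in \eqref{eq:PW_marting_f_g}. These are genuine martingales, bounded at each finite time. Fix $s,t>0$ and let $\tau:=\inf\{k: \|f_k\|>s\}$. The stopped pair $f^{\tau}$, $g^{\tau}$ is again a pair of tangent Paley-Walsh martingales, since stopping multiplies $d_n$ by the predictable indicator $\mathbf 1_{\tau\ge n}$. On $\{f^*\le s\}$ one has $\tau=\infty$, so $g^{\tau}=g$ there. The key step is the bound
\[
(f^\tau)^*\le s+\|d_\tau\|\mathbf 1_{\tau<\infty}.
\]
The overshoot $\|d_\tau\|$ is the main obstacle, because in Paley-Walsh form $\|d_n\|=\|\phi_{n-1}\|$ is unbounded a priori.

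To handle the overshoot, I would first try to stop predictably instead. Set $\sigma:=\inf\{k: \|f_k\|>s \text{ or } \|\phi_k\|>s\}$. This is a stopping time, and $\phi_k$ is $\mathcal F_k$-measurable, so $\|d_{\sigma+1}\|$ is controlled; but I would rather stop before the jump. Let $\sigma':=\inf\{k: \|f_k\|>s\text{ or }\|\phi_k\|>2s\}$ and define the martingales stopped at $\sigma'$. Then $(f^{\sigma'})^*\le 3s$ when the first condition triggers, and $\le s$ when the second one does. Hence
\[
\mathbb E\bigl((f^{\sigma'})^*\bigr)^p\le (3s)^p.
\]
It remains to note that
\[
\{\sigma'<\infty\}\subset\{f^*>s\}\cup\{\exists k:\|\phi_k\|>2s\},
\]
and that $\|\phi_k\|=\|d_{k+1}\|\le 2f^*$ gives $\{\|\phi_k\|>2s\}\subset\{f^*>s\}$. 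Thus $g^{\sigma'}=g$ outside $\{f^*>s\}$. By Markov's inequality and \eqref{eq:UMD maximal},
\[
\mathbb P(g^*>t)\le \mathbb P(f^*>s)+\mathbb P\bigl((g^{\sigma'})^*>t\bigr)\le \mathbb P(f^*>s)+C_{p,V}^p3^ps^p/t^p.
\]
This is \eqref{eq:co.decoupling_L_0} for Paley-Walsh pairs with $\widetilde C_{p,V}=3^pC_{p,V}^p$ (assuming $C_{p,V}\ge1$, which holds as noted). The ``if'' part of Theorem \ref{thm.decoupling_L_0}, together with Remark \ref{re:L0_paley_walsh}, then yields that $V$ is UMD.

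The step I expect to need the most care is checking that the stopped processes are still tangent conditionally symmetric. For this I would note that $\mathbf 1_{\sigma'\ge n}$ is $\mathcal F_{n-1}$-measurable, so $f^{\sigma'}$ and $g^{\sigma'}$ are again Paley-Walsh martingales with $\phi_{n-1}\mathbf 1_{\sigma'\ge n}$. Equivalently, stopped local characteristics coincide.
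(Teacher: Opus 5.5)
Your argument is correct, but it takes a different route from the paper's proof of this corollary.

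\textbf{The paper's route.} The paper assumes \eqref{eq:UMD maximal} and reruns the good-$\lambda$ argument of Proposition~\ref{prop:goodlambdapdQlc} with $p_0=p$; there UMD enters only through \eqref{eq:UMD maximal}, applied to the auxiliary pair $\widehat M,\widehat N$. It then repeats the ``only if'' computation of Theorem~\ref{thm.decoupling_L_0}, controlling $\mathbb P(\Delta N^*>\delta\lambda)$ by $\mathbb P(\Delta M^*>\delta\lambda)$ via \cite[Lemma 5.12]{Yar20}. This gives \eqref{eq:co.decoupling_L_0} for all tangent conditionally symmetric processes, and the ``if'' part concludes.

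\textbf{Your route.} You skip good-$\lambda$ altogether and work only with Paley--Walsh pairs. You use that the next jump size $\|d_{n+1}\|=\|\phi_n\|$ is $\mathcal F_n$-measurable to stop before a large jump, so that $(f^{\sigma'})^*\le 3s$ and $\{\sigma'<\infty\}\subseteq\{f^*>s\}$, and then apply Markov's inequality. This is exactly the mechanism the paper uses later in the proof of (ii)$\Rightarrow$(i) of Theorem~\ref{thm.weak_to_weak_dec}; your $\sigma'$ is its $\tau$. Since $\|\cdot\|_{p,p}$ is a constant multiple of the $L^p$ norm, your statement is in effect the case $p'=p$, $q=q'=p$ of that theorem.

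\textbf{What each buys.} Your route is more elementary and gives the constant $3^pC_{p,V}^p$. It uses \eqref{eq:UMD maximal} only for bounded Paley--Walsh pairs, regarded as continuous-time processes constant on each $[n,n+1)$ (as the paper does implicitly), so it proves slightly more. The paper's route instead shows, without first passing through UMD, that \eqref{eq:UMD maximal} at a single $p$ already yields the tail estimate for arbitrary tangent conditionally symmetric processes, with constant $8^{p\vee 1}C_{p,V}^p$. The price is the heavier machinery behind Proposition~\ref{prop:goodlambdapdQlc} and the jump comparison lemma.

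The exploratory detours through $\tau$ and $\sigma$ in your write-up can simply be deleted.
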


\begin{proof}
Fix $0<p<\infty$. The fact that UMD implies \eqref{eq:UMD maximal} follows from \cite[Corollary 3.33 and Theorem 5.14]{Yar20}. For the converse, assume that \eqref{eq:UMD maximal} holds. Then as in the proof of Proposition \ref{prop:goodlambdapdQlc}, this yields the good-$\lambda$ inequalities \eqref{eq:goodlforstochintwwrtrm}, which in turn imply the tail estimate \eqref{eq:co.decoupling_L_0}. The latter characterizes UMD by Theorem \ref{thm.decoupling_L_0}.
\end{proof}

We conclude this section with a qualitative characterization of UMD Banach spaces in terms of ucp convergence. This is rather useful in applications, where ucp convergence of a given sequence may be deduced from that of a tangent sequence, which is often easier to analyze.

\begin{corollary}
Let $V$ be a Banach space. Then the following are equivalent:
\begin{enumerate}[\rm(i)]
\item For every pair of sequences $(M^n)_{n \geq 1}$ and $(N^n)_{n \geq 1}$ of tangent conditionally symmetric processes $M^n$ and $N^n$, one has that $M^n \to 0$ in ucp if and only if $N^n \to 0$ in ucp;
\item $V$ is UMD.
\end{enumerate}
\end{corollary}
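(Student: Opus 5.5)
For (ii)$\Rightarrow$(i) I would use Theorem~\ref{thm.decoupling_L_0} with, say, $p=1$, applied separately on each finite interval $[0,T]$. Fix $T\ge 0$. The stopped processes $M^{n,T}:=(M^n)^T$ and $N^{n,T}:=(N^n)^T$ are again tangent and conditionally symmetric. Indeed, their truncations \eqref{eq:MndefforMpdqlcnotmart} are the stopped truncations, stopping preserves local characteristics (compensator of a stopped process is the stopped compensator, quadratic variation is stopped), and ucp convergence is preserved under stopping. Then \eqref{eq:co.decoupling_L_0} gives
\[
\mathbb P\bigl((N^n)^*_T>t\bigr)\le \widetilde C_{1,V}\Bigl(\frac st+\mathbb P\bigl((M^n)^*_T>s\bigr)\Bigr).
\]
Given $t,\eta>0$, choose $s=\eta t/(2\widetilde C_{1,V})$. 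If $M^n\to0$ in ucp, the second term tends to $0$, so $\limsup_n\mathbb P((N^n)^*_T>t)\le\eta/2$. Since $\eta$ is arbitrary, $N^n\to0$ in ucp. The roles of $M^n$ and $N^n$ are symmetric because tangency is symmetric, which gives the reverse implication.

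For (i)$\Rightarrow$(ii) I would argue by contradiction, using Paley--Walsh martingales as in Remark~\ref{re:L0_paley_walsh}. Suppose $V$ is not UMD. By the ``if'' part of Theorem~\ref{thm.decoupling_L_0}, for $p=1$ the estimate fails with every constant. Going through the proof, the Burkholder argument produces, for each $k$, Paley--Walsh martingales $f^k,g^k$ of the form \eqref{eq:PW_marting_f_g} with $g^{k*}>1$ a.s.\ (or at least $\mathbb P(g^{k*}>1)\ge 1/2$) and $\mathbb E\|f^k_\infty\|\le 1/k$. Concretely, if no such uniform lower bound on $\mathbb E\|f_\infty\|$ held, \eqref{eq:f_infty_L1_est_in_terms_of_g*} would be the only obstruction. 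Conversely, non-UMD means Burkholder's $\zeta$-type condition fails, so I would invoke \cite[Theorem 2.1]{Burk81} in contrapositive form: failure of UMD yields finite Paley--Walsh pairs with $g^*>1$ a.s.\ and $\mathbb E\|f_\infty\|$ arbitrarily small.

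These discrete pairs then have to be embedded as processes. I would realise each pair as a discrete process with jumps at the times $1,2,\dots$ via \cite[Remark 3.2]{Yar20}, compressed into $[0,1]$ if convenient. This gives tangent conditionally symmetric processes $M^k$ and $N^k$ with $(M^k)^*=f^{k*}$ and $(N^k)^*=g^{k*}$. Doob's maximal inequality gives $\mathbb P((M^k)^*>\varepsilon)\le \mathbb E\|f^k_\infty\|/\varepsilon\to0$, so $M^k\to0$ in ucp. On the other hand, $\mathbb P((N^k)^*_1>1)\ge 1/2$ for every $k$, so $N^k$ does not converge to $0$ in ucp. This contradicts (i).

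The main obstacle is the extraction step in the second direction: producing pairs with $\mathbb E\|f_\infty\|\to0$ while $\mathbb P(g^*>1)$ stays bounded below, rather than merely knowing that some inequality fails. I would rely on Burkholder's equivalence between UMD and the statement ``$g^*>1$ a.s.\ implies $\mathbb E\|f_\infty\|\ge c>0$'' for Paley--Walsh pairs. This is exactly the implication already used in the ``if'' part of Theorem~\ref{thm.decoupling_L_0}, so negating it supplies the desired sequence directly.
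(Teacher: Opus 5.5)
Your proposal is correct and follows essentially the same route as the paper: (ii)$\Rightarrow$(i) applies \eqref{eq:co.decoupling_L_0} with $p=1$ to the processes stopped at $T$, and (i)$\Rightarrow$(ii) argues by contradiction from Burkholder's Paley--Walsh pairs with $g^*>1$ a.s.\ and $\mathbb E\|f_\infty\|\to0$, realised as tangent conditionally symmetric processes on $[0,1]$, together with Doob's inequality. The only difference is that you embed via \cite[Remark 3.2]{Yar20} rather than \cite[Proof of Theorem 4.21]{Y17GMY}; note that the compression into $[0,1]$ is essential rather than ``if convenient'', because with jumps at the integer times one gets $\mathbb E(N^k)^*_T\le 2T\,\mathbb E\|f^k_\infty\|\to0$ for each fixed $T$, so $N^k\to0$ in ucp and no contradiction would arise.
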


\begin{proof}
(i) $\Rightarrow$ (ii): Let (i) hold, but $V$ is not UMD. Then thanks to \cite[p.\ 999]{Burk81} there exists a sequence of tangent Paley-Walsh martingales $(f^n)_{n\geq 1}$ and $(g^n)_{n\geq 1}$ such that $(g^n)^*>1$ a.s., but $\|f^n_{\infty}\|_1\to 0$. By a standard procedure, described in \cite[Proof of Theorem 4.21]{Y17GMY}, $(f^n)_{n\geq 1}$ and $(g^n)_{n\geq 1}$ can be distribution-wise represented as tangent continuous-time conditionally symmetric processes $(M^n)_{n \geq 1}$ and $(N^n)_{n \geq 1}$, respectively, over the time interval $[0,1]$. Note that $N^n\nrightarrow 0$ in ucp as $(N^n)^*>1$ a.s., but thanks to Doob's maximal inequality
\[
\mathbb P((M^n)^*_{\infty}>\eps)=\mathbb P((M^n)^*_1>\eps) = \mathbb P((f^n)^*>\eps) \leq \|f^n_{\infty}\|_1/\eps \to 0,\;\;\; n\to \infty,
\]
consequently, $M^n\to 0$ in ucp, which contradicts (i).

(ii) $\Rightarrow$ (i): Now let $V$ be UMD, $(M^n)_{n \geq 1}$ and $(N^n)_{n \geq 1}$ be as described in (i). Assume that $M^n \to 0$ in ucp. For any $T>0$ it follows from \eqref{eq:co.decoupling_L_0}  with $p=1$ and the fact that the stopped processes $(M^n)^T$ and $(N^n)^T$ are tangent for any $n\geq 1$, that for any $t,s>0$
\[
\limsup_{n\to \infty}\mathbb P((N^n)^*_T>t)\lesssim_V \limsup_{n\to \infty} \Bigl(\frac {s}{t} + \mathbb P((M^n)^*_T>s)\Bigr) = \frac {s}{t}.
\]
Let $\eps>0$ be fixed. Then for any $t>0$, we may set $s=\eps t$, so that it follows from the previous estimate that $\limsup_{n\to \infty}\mathbb P((N^n)^*_T>t)\lesssim_V\eps$. Since $\eps>0$ and $T>0$ are arbitrary, we have that $N^n \to 0$ in ucp.
\end{proof}

\section{Lorentz norm estimates}
As a consequence of Theorem \ref{thm.decoupling_L_0} and our good-$\lambda$ inequalities, we show that UMD is equivalent not only to $L^p$ decoupling inequalities,
but can in fact be characterized by any analogous Lorentz-type $L^{p,q}$ decoupling estimate.

For any $p\in(0,\infty)$, $q\in(0,\infty]$, and nonnegative random variable $X$, we
define
\begin{equation}\label{eq:def_of_p,q_norm}
\|X\|_{p,q}
:=
\begin{cases}
\left(\displaystyle\int_0^\infty \bigl(\lambda\, \P(X>\lambda)^{1/p}\bigr)^q\,\frac{d\lambda}{\lambda}\right)^{1/q},
& q\in(0,\infty),\\[2ex]
\displaystyle\sup_{\lambda>0}\lambda\, \P(X>\lambda)^{1/p},
& q=\infty.
\end{cases}
\end{equation}
This is the usual Lorentz quasi-norm written in terms of the distribution function $\P(X>\lambda)$ (see e.g.\ \cite[Proposition 1.4.9]{GrafCl}).
In particular, $L^{p,\infty}$ is the weak-$L^p$ space, and $L^{p,p}=L^p$ up to an
equivalent renorming. 

\begin{remark}\label{rem:emb_Loretnz_spaces}
Note that $\|\cdot\|_{p,q'}\lesssim_{p,q,q'} \|\cdot\|_{p,q}$ by \cite[Proposition 1.4.10]{GrafCl} given $q'\geq q$.
Further, $\|\cdot\|_{p',q'}\lesssim_{p,q,p',q'}\|\cdot\|_{p,q}$ for any $q, q'\in(0,\infty]$ provided $p'< p$ as then 
$$
\|X\|_{p',q'} = \|X\cdot 1\|_{p',q'}\stackrel{(i)}\lesssim_{p,q,p',q'}\|X\|_{p,\infty}\|1\|_{\tfrac{pp'}{p-p'},q'}\stackrel{(ii)}\lesssim_{p,p',q'}\|X\|_{p,\infty}\stackrel{(iii)}\lesssim_{p,q}\|X\|_{p,q},
$$
where $(i)$ follows due to H\"older inequalities for Lorentz spaces (see e.g.\ \cite[Theorem 4.5]{Hunt66}) and the fact that $1/p'=1/p+1/\frac{pp'}{p-p'}$, $(ii)$ holds as in probability spaces $\|1\|_{\tfrac{pp'}{p-p'},q'}<\infty$, and $(iii)$ is a consequence of the first sentence of this remark.
\end{remark}

In the next proof, we will repeatedly use the elementary fact that for any $r\in(0,\infty)$ there exists a
constant $c_r\geq 1$ such that
\begin{equation}\label{eq:elementary_subadditivity_r}
(a+b)^r \leq c_r(a^r+b^r), \qquad a,b\geq 0.
\end{equation}
Indeed, one may take $c_r=1$ if $r\in(0,1]$ and $c_r=2^{r-1}$ if $r> 1$.

\begin{theorem}\label{thm.weak_to_weak_dec}
 Let $V$ be a Banach space, $p,p'\in(0, \infty)$ and $q,q' \in (0,\infty]$ be such that either $p=p'$ and $q'\geq q$, or  $p'<p$.  Then the following are equivalent
\begin{enumerate}[{\rm(i)}]
\item $V$ is UMD,
\item $  \|N^*\|_{p', q'} \lesssim_{p,q, p',q',V} \|M^*\|_{p, q}$ for any $V$-valued tangent conditionally symmetric processes $M$ and $N$.
\end{enumerate}
\end{theorem}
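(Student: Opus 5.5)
The plan is to prove (i)$\Rightarrow$(ii) via the good-$\lambda$ inequality \eqref{eq:lorentz_goodlambda_start}, and (ii)$\Rightarrow$(i) by reducing to Theorem \ref{thm.decoupling_L_0}.

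\emph{(i)$\Rightarrow$(ii).} By Remark \ref{rem:emb_Loretnz_spaces}, $\|N^*\|_{p',q'}\lesssim\|N^*\|_{p,q}$ under either assumption on $(p',q')$. It therefore suffices to prove the diagonal estimate $\|N^*\|_{p,q}\lesssim_{p,q,V}\|M^*\|_{p,q}$. Start from \eqref{eq:lorentz_goodlambda_start}: for $\delta>0$, $\beta>\delta+1$ and $p_0>0$,
\[
\P(N^*>\beta\lambda)\le \eps\,\P(N^*>\lambda)+8\,\P(2M^*/\delta>\lambda),
\qquad \eps=\frac{2^{p_0}\delta^{p_0}C_{p_0,V}^{p_0}}{(\beta-\delta-1)^{p_0}}.
\]
Fix $\beta=3$ and $p_0=1$, so that $\eps\to0$ as $\delta\to0$. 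Raise both sides to the power $1/p$ and use \eqref{eq:elementary_subadditivity_r} with $r=1/p$. Multiply by $\lambda$, raise to the power $q$, use \eqref{eq:elementary_subadditivity_r} again with $r=q$, and integrate against $d\lambda/\lambda$; the change of variables $\lambda\mapsto\lambda/\beta$ is harmless. This gives
\[
\|N^*\|_{p,q}^q\le c\,\beta^q\eps^{q/p}\|N^*\|_{p,q}^q+c'\,\beta^q(2/\delta)^q\|M^*\|_{p,q}^q .
\]
For $q=\infty$ take suprema in the same way. Choosing $\delta$ small makes $c\beta^q\eps^{q/p}\le1/2$, and the first term can then be absorbed into the left-hand side.

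The main obstacle is that absorption requires $\|N^*\|_{p,q}<\infty$ a priori. I would handle this by localization. One option is to truncate, replacing $N^*$ by $N^*\wedge K$ and restricting the integral to $\lambda\le K$; this does not immediately preserve the good-$\lambda$ form, since the truncated event for $N^*$ is only partially controlled. The alternative I would try first is to work with the stopped processes at $\sigma_K=\inf\{t:\|N_t\|>K\}$ and their jump-truncated approximations $M^n,N^n$, for which $N^*\le K+\Delta N^*$. Jump sizes are bounded by $n$ in $M^n,N^n$, so $\|(N^n)^{\sigma_K}\|_{p,q}<\infty$. Stopping preserves tangency, so the inequality holds for these processes. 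Then let $K\to\infty$ and pass $n\to\infty$ using ucp convergence and Fatou's lemma applied to the distribution functions. For the $M$-side one also needs $(M^n)^*$ controlled by $M^*$. Rather than working this out, I would redo the good-$\lambda$ step directly with the stopped processes, since \eqref{eq:lorentz_goodlambda_start} only uses tangency.

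\emph{(ii)$\Rightarrow$(i).} By Remark \ref{re:L0_paley_walsh} it suffices to obtain \eqref{eq:co.decoupling_L_0}, for some exponent, for tangent Paley-Walsh martingales. Given $s>0$, define the stopping time $\tau=\inf\{k:\|f_k\|>s\}$ and set $\tilde f=f^{\tau}$, $\tilde g=g^{\tau}$. These are still tangent Paley-Walsh martingales: stopping only replaces $\phi_{n-1}$ by $\phi_{n-1}\mathbf 1_{\tau\ge n}$, which is predictable. Each jump is $\|d_n\|=\|\phi_{n-1}\|$, and $\phi_{n-1}$ is predictable, so on $\{\tau\geq n\}$ one has $\|\phi_{n-1}\|\le 2s$ (since $\|f_{n-1}\|\le s$). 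Hence $\tilde f^*\le3s$, and $\tilde f^*\le s$ on $\{f^*\le s\}$, where also $\tilde g=g$.

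Applying (ii) together with Chebyshev's inequality in weak-$L^{p'}$, and noting $\|\tilde f^*\|_{p,q}\lesssim s$,
\[
\P(g^*>t)\le\P(f^*>s)+\P(\tilde g^*>t)\le \P(f^*>s)+t^{-p'}\|\tilde g^*\|_{p',\infty}^{p'}\lesssim \P(f^*>s)+\frac{s^{p'}}{t^{p'}} .
\]
This is \eqref{eq:co.decoupling_L_0} with exponent $p'$. The ``if'' part of Theorem \ref{thm.decoupling_L_0}, applied with $p'$, then shows that $V$ is UMD.
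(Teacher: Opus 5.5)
Your (ii)$\Rightarrow$(i) step has a genuine gap. With $\tau=\inf\{k:\|f_k\|>s\}$, the claim ``on $\{\tau\ge n\}$ one has $\|\phi_{n-1}\|\le 2s$'' is false. The event $\{\tau\ge n\}=\{\|f_k\|\le s,\ k\le n-1\}$ puts no restriction on $\phi_{n-1}$. On $\{\tau=n\}$ the stopped martingale still contains the overshoot jump $d_\tau=r_\tau\phi_{\tau-1}$, whose norm can be arbitrarily large compared to $s$. Already $\phi_0=100s\,x$ with $\|x\|=1$ gives $\tilde f^*=100s$. So the bound $\|\tilde f^*\|_{p,q}\lesssim s$ that you feed into (ii) fails, and you get no uniform $s^{p'}/t^{p'}$ term.

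The missing idea, which is exactly what the paper does, is to build the predictability of the jump sizes into the stopping rule: $\tau:=\inf\{n\ge 0:\|f_n\|>s\ \text{or}\ \|\phi_n\|>2s\}$. This is a stopping time because $\|d_{n+1}\|=\|\phi_n\|$ is $\mathcal F_n$-measurable. With this $\tau$:
\begin{itemize}
\item for $1\le n\le\tau$ one has $\|f_n\|\le\|f_{n-1}\|+\|\phi_{n-1}\|\le 3s$, so $(f^\tau)^*\le 3s$;
\item still $\tau=\infty$ on $\{f^*\le s\}$, since there $\|\phi_n\|=\|d_{n+1}\|\le 2s$;
\item $\{\tau<\infty\}\subseteq\{f^*>s\}$, because $\|f_\tau\|\le s$ together with $\|\phi_\tau\|>2s$ forces $\|f_{\tau+1}\|>s$ for either sign of $r_{\tau+1}$.
\end{itemize}
With this modification the rest of your argument coincides with the paper's.

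Your (i)$\Rightarrow$(ii) is the paper's argument with a different choice of parameters, and both choices are fine. You fix $\beta=3$, $p_0=1$ and let $\delta\to0$, the classical Burkholder choice. The paper instead fixes $\delta=1$ and takes $p_0>\max\{1,p\}$, so that $\varepsilon^{1/p}=o(\beta^{-1})$ as $\beta\to\infty$.

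You are right that absorption needs $\|N^*\|_{p,q}<\infty$; the paper does not address this. However, your localization is unnecessary, and, as you noticed, the jump-truncated version breaks down because $(M^n)^*$ is not dominated by $M^*$. Contrary to your worry, truncation does preserve the good-$\lambda$ form: \eqref{eq:lorentz_goodlambda_start} holds verbatim with $N^*$ replaced by $N^*\wedge K$.
\begin{itemize}
\item For $\beta\lambda\ge K$ the left-hand side vanishes.
\item For $\beta\lambda<K$ one also has $\lambda<K$ (as $\beta>1$), so both tails of $N^*\wedge K$ agree with those of $N^*$.
\end{itemize}
Since $\|N^*\wedge K\|_{p,q}<\infty$, you can absorb, obtain a bound uniform in $K$, and let $K\to\infty$ by monotone convergence.
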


\begin{proof} (i) $\Rightarrow$ (ii):
First assume that $(p',q')=(p,q)$. For a nonnegative random variable $X$, define the shorthands
\[
d_X(\lambda):=\P(X>\lambda)\qquad \text{and }\qquad F_X(\lambda):=\lambda d_X(\lambda)^{1/p}, \qquad \lambda>0.
\]
Then \eqref{eq:lorentz_goodlambda_start} reads
\[
d_{N^*/\beta}(\lambda)\leq \varepsilon d_{N^*}(\lambda)+8d_{2M^*/\delta}(\lambda),
\qquad \lambda>0.
\]
Taking $1/p$-powers and using \eqref{eq:elementary_subadditivity_r} with
$r=1/p$, we obtain
\[
d_{N^*/\beta}(\lambda)^{1/p}
\leq c_{1/p}\bigl(\varepsilon^{1/p}d_{N^*}(\lambda)^{1/p}
+8^{1/p}d_{2M^*/\delta}(\lambda)^{1/p}\bigr),
\qquad \lambda>0.
\]
Multiplying by $\lambda$ yields
\begin{equation}\label{eq:lorentz_F_estimate}
F_{N^*/\beta}(\lambda)
\leq c_{1/p}\varepsilon^{1/p}F_{N^*}(\lambda)
+c_{1/p}8^{1/p}F_{2M^*/\delta}(\lambda),
\qquad \lambda>0.
\end{equation}

Next, note that for any $a>0$, and any nonnegative random variable $X$, we have
\[
d_{aX}(\lambda)=d_X(\lambda/a), \qquad \lambda>0.
\]
Therefore
\[
F_{aX}(\lambda)
=\lambda d_X(\lambda/a)^{1/p}
=aF_X(\lambda/a), \qquad \lambda>0.
\]

Assume first that $q\in(0,\infty)$. Taking the $L^q((0,\infty),d\lambda/\lambda)$
quasi-norm in \eqref{eq:lorentz_F_estimate}, and using again
\eqref{eq:elementary_subadditivity_r}, now with $r=q$, we get
\[
\|F_{N^*/\beta}\|_{L^q((0,\infty),d\lambda/\lambda)}
\leq c_q^{1/q}c_{1/p}\varepsilon^{1/p}\|F_{N^*}\|_{L^q((0,\infty),d\lambda/\lambda)}
\]
\[
\qquad\qquad\qquad\qquad
+c_q^{1/q}c_{1/p}8^{1/p}\|F_{2M^*/\delta}\|_{L^q((0,\infty),d\lambda/\lambda)}.
\]
By the definition of the Lorentz quasi-norm and its homogeneity 
this becomes
\[
\beta^{-1}\|N^*\|_{p,q}
\leq c_q^{1/q}c_{1/p}\varepsilon^{1/p}\|N^*\|_{p,q}
+c_q^{1/q}c_{1/p}8^{1/p}\frac{2}{\delta}\|M^*\|_{p,q}.
\]
Equivalently,
\begin{equation}\label{eq:lorentz_absorb}
\bigl(\beta^{-1}-c_q^{1/q}c_{1/p}\varepsilon^{1/p}\bigr)\|N^*\|_{p,q}
\leq c_q^{1/q}c_{1/p}8^{1/p}\frac{2}{\delta}\|M^*\|_{p,q}.
\end{equation}
Take $p_0>\max\{1,p\}$ and fix $\delta=1$.
Then by the very definition of $\eps$, we have
\[
\varepsilon^{1/p}
=
\frac{2^{p_0/p}C_{p_0,V}^{p_0/p}}{(\beta-2)^{p_0/p}}.
\]
Since $p_0/p>1$, we have $\varepsilon^{1/p}=o(\beta^{-1})$ as $\beta\to\infty$.
Hence, one can choose $\beta>2$ large enough so that
\[
\beta^{-1}-c_q^{1/q}c_{1/p}\varepsilon^{1/p}>0.
\]
Then \eqref{eq:lorentz_absorb} yields
\[
\|N^*\|_{p,q} \leq C_{p,q,V}\|M^*\|_{p,q}.
\]

It remains to consider the case $q=\infty$. Taking the supremum over $\lambda>0$ in
\eqref{eq:lorentz_F_estimate}, we get
\[
\|N^*/\beta\|_{p,\infty}
\leq c_{1/p}\varepsilon^{1/p}\|N^*\|_{p,\infty}
+c_{1/p}8^{1/p}\|2M^*/\delta\|_{p,\infty}.
\]
By homogeneity of the Lorentz quasi-norm,
\[
\bigl(\beta^{-1}-c_{1/p}\varepsilon^{1/p}\bigr)\|N^*\|_{p,\infty}
\leq c_{1/p}8^{1/p}\frac{2}{\delta}\|M^*\|_{p,\infty}.
\]
Choosing again $p_0>\max\{1,p\}$, $\delta=1$, and then $\beta$ large enough, we
obtain
\[
\|N^*\|_{p,\infty} \leq C_{p,\infty,V}\|M^*\|_{p,\infty}.
\]
This completes the proof of the implication given $(p', q')=(p,q)$.   

The case when either $p=p'$ and $q'\geq q$, or  $p'<p$ follows from the inequality for $(p', q')=(p,q)$ and Remark \ref{rem:emb_Loretnz_spaces}.

  (ii) $\Rightarrow$ (i): Let $f$ and $g$ be $V$-valued Paley--Walsh martingales as in \eqref{eq:PW_marting_f_g}. Then $f$ and $g$ are
tangent and conditionally symmetric. Fix $s>0$, and define the stopping time
\[
\tau:=\inf\{n\geq 0: f_n^*>s\; \text{or}\;\|d_{n+1}\|>2s\},
\]
where $\tau$ is a stopping time as $\|d_{n+1}\|=\|\phi_n(r_1,\ldots,r_n)\|$ is $\mathcal F_{n}$-measurable.
Then the stopped discrete-time processes $f^\tau$ and $g^\tau$ are again tangent and
conditionally symmetric (here and later $f_0=g_0=0$, so $f^{\tau}$ and $g^{\tau}$ are well-defined on $\{\tau=0\}$). Hence, by~(ii), and the very definition of $\tau$, we have
\[
\|(g^\tau)^*\|_{p',q'}
\lesssim_{p,q,p',q',V}
\|(f^\tau)^*\|_{p,q},
\]
and $ \|(f^\tau)^*\|_{p,q}\lesssim_{p,q} s$ as for $1\leq n\leq \tau$ one has that a.s.
\[
\|f_n\|=\|f_{n-1}+d_n\|\leq \|f_{n-1}\|+\|d_n\|\leq 3s.
\]
Combining these observations with Remark \ref{rem:emb_Loretnz_spaces}, we obtain
\[
\|(g^\tau)^*\|_{p',\infty}
\lesssim_{p',q'}
\|(g^\tau)^*\|_{p',q'}
\lesssim_{p,q,p',q',V}
s.
\]
Therefore, for every $t>0$, the definition of the $L^{p',\infty}$ quasi-norm gives
\[
\mathbb P((g^\tau)^*>t)
\leq \frac{\|(g^\tau)^*\|_{p',\infty}^{p'}}{t^{p'}}
\lesssim_{p,q,p',q',V}
\frac{s^{p'}}{t^{p'}}.
\]
Now first note that
\[
\{g^*>t,\ \tau=\infty\}\subseteq \{(g^\tau)^*>t\},
\]
from which it immediately follows that
\[
\mathbb P(g^*>t,\ \tau=\infty)
\leq
\mathbb P((g^\tau)^*>t)
\lesssim_{p,q,p',q',V}
\frac{s^{p'}}{t^{p'}}.
\]
Second, if $\tau<\infty$, then either $\|f_{\tau}\|>s$, or $\|f_{\tau}\|<s$ and $\|d_{\tau+1}\|>2s$. In the second case one has 
$$
\|f_{\tau+1}\|=\|f_{\tau} + d_{\tau+1}\|\geq \| d_{\tau+1}\|-\|f_{\tau}\|>s.
$$
Thus $\{\tau<\infty\}\subseteq \{f^*>s\}$, and therefore $\mathbb P(\tau<\infty)\leq \mathbb P(f^*>s)$.

Summing up these two arguments, we obtain the estimate
\begin{equation*}
\begin{aligned}
\mathbb P(g^*>t) &= \mathbb P(g^*>t, \tau<\infty) + \mathbb P(g^*>t, \tau=\infty)\\
&\leq \mathbb P(f^*>s)+\mathbb P(g^*>t,\ \tau=\infty)\\
&\lesssim_{p,q,p',q',V}
\Bigl(\frac{s^{p'}}{t^{p'}}+\mathbb P(f^*>s)\Bigr),
\quad s,t>0,
\end{aligned}
\end{equation*}
which characterizes UMD by Theorem \ref{thm.decoupling_L_0} and Remark \ref{re:L0_paley_walsh}.
  \end{proof}

  \begin{remark}
  As in the latter proof the {\em(ii)} $\Rightarrow$ {\em(i)} direction was shown by only exploiting Paley-Walsh martingales, the characterization of the UMD property provided in Theorem \ref{thm.weak_to_weak_dec} remains in power even if we restrict ourselves to discrete conditionally symmetric processes (similar to Proposition \ref{prop.decoupling_L_0_disc}), to purely discontinuous conditionally symmetric processes with accessible jumps (see Corollary \ref{cor:L0_dec_acc_jumps}), to continuous local martingales (see Proposition \ref{prop:L0_dec_ineq_cont_mar}), or to purely discontinuous quasi-left continuous conditionally symmetric processes (analogously to Proposition \ref{prop:L0_dec_ipd_qlc_process}).
  \end{remark}

\bibliographystyle{plain}
\def\cprime{$'$} \def\polhk#1{\setbox0=\hbox{#1}{\ooalign{\hidewidth
  \lower1.5ex\hbox{`}\hidewidth\crcr\unhbox0}}}
  \def\polhk#1{\setbox0=\hbox{#1}{\ooalign{\hidewidth
  \lower1.5ex\hbox{`}\hidewidth\crcr\unhbox0}}} \def\cprime{$'$}

\end{document}